\DeclareRobustCommand{\SkipTocEntry}[5]{}
\definecolor{blue}{rgb}{.255,.41,.884} 
\definecolor{red}{rgb}{1, 0, 0} 
\definecolor{green}{rgb}{.196,.804,.196} 
\definecolor{yellow}{rgb}{1,.648,0} 
\definecolor{pink}{rgb}{1,0.5,0.5}
\newtheorem{theorem}{Theorem}[section]
\newtheorem{lemma}[theorem]{Lemma}
\newtheorem{corollary}[theorem]{Corollary}
\theoremstyle{definition}
\newtheorem{example}[theorem]{Example}
\theoremstyle{remark}
\newtheorem{remark}[theorem]{Remark}
\newcommand{\be}{\begin{equation}}
\newcommand{\ee}{\end{equation}}
\newcommand{\om}{\omega}
\newcommand{\si}{\sigma}
\newcommand{\ba}{\begin{array}}
\newcommand{\ea}{\end{array}}
\newcommand{\beq}{\begin{eqnarray}}
\newcommand{\eeq}{\end{eqnarray}}
\newtheorem{lm}{lemma}
\newtheorem{thee}{theorem}
\newtheorem{proo}{proposition}
\newtheorem{co}{corollary}
\newtheorem{rem}{remark}
\newtheorem{deff}{definition}
\newcommand{\bd}{\begin{deff}}
\newcommand{\ed}{\end{deff}}
\newcommand{\bl}{\begin{lm}}
\newcommand{\el}{\end{lm}}
\newcommand{\bp}{\begin{proo}}
\newcommand{\ep}{\end{proo}}
\newcommand{\bt}{\begin{thee}}
\newcommand{\et}{\end{thee}}
\newcommand{\bc}{\begin{co}}
\newcommand{\ec}{\end{co}}
\newcommand{\brm}{\begin{rem}}
\newcommand{\erm}{\end{rem}}
\def\Cal{\mathcal}
\newcommand{\bS}{\mathbb{S}}
\newcommand{\newc}{\newcommand}
\let\ccdot.
\newc{\aR}{\mbox{\boldmath{$ R$}}}
\newc{\aS}{\mbox{\boldmath{$ S$}}}
\newc{\aT}{\mbox{\boldmath{$ T$}}}
\newc{\aW}{\mbox{\boldmath{$ W$}}}
\newc{\aD}{\mbox{\boldmath{$ D$}}\hspace{-.2mm}}
\newc{\aK}{\mbox{\boldmath{$ K$}}}
\newc{\aL}{\mbox{\boldmath{$ L$}}}
\newcommand{\ce}{{\Cal E}}
\newcommand{\ct}{{\Cal T}}
\newcommand{\bT}{{\Bbb T}}
\newcommand{\Up}{\Upsilon}
\newcommand{\cT}{{\mathcal T}}
\let\hash=\sharp  
\newcommand{\cV}{{\Cal V}}
\newcommand{\nn}[1]{(\ref{#1})}
\newcommand{\bg}{{\sf g}}
\newc{\obstrn}[2]{B^{#1}_{#2}}
\newcommand{\rpl}                         
{\mbox{$
\begin{picture}(12.7,8)(-.5,-1)
\put(0,0.2){$+$}
\put(4.2,2.8){\oval(8,8)[r]}
\end{picture}$}}
\newcommand{\lpl}                         
{\mbox{$
\begin{picture}(12.7,8)(-.5,-1)
\put(2,0.2){$+$}
\put(6.2,2.8){\oval(8,8)[l]}
\end{picture}$}}
\newc{\tensor}[1]{#1}
\newc{\Mvariable}[1]{\mbox{#1}}
\newc{\down}[1]{{}_{#1}}
\newc{\up}[1]{{}^{#1}}
\newc{\JulyStrut}{\rule{0mm}{6mm}}
\newc{\midtenPan}{\mbox{\sf S}}
\newc{\midten}{\mbox{\sf T}}
\newc{\midtenEi}{\mbox{\sf U}}
\newc{\ATen}{\mbox{\sf E}}
\newc{\BTen}{\mbox{\sf F}}
\newc{\CTen}{\mbox{\sf G}}
\def\sideremark#1{\ifvmode\leavevmode\fi\vadjust{\vbox to0pt{\vss
 \hbox to 0pt{\hskip\hsize\hskip1em
 \vbox{\hsize2cm\tiny\raggedright\pretolerance10000
  \noindent #1\hfill}\hss}\vbox to8pt{\vfil}\vss}}}
\numberwithin{equation}{section}
\newcommand{\hh}{{\hspace{.3mm}}}
\newcommand{\cc}{\boldsymbol{c}}
\newcommand{\pdot}{{\boldsymbol{\cdot}}}
\newcommand{\sss}{\scriptscriptstyle}
\renewcommand\geq{\geqslant}
\renewcommand\leq{\leqslant}
\DeclareMathOperator{\EXT}{d}
\newcommand{\ext}{{\EXT\hspace{.01mm}}}
\newcommand{\Dslash}{\cancel{D}}
\newcommand\reallywidehat[1]{%
\savestack{\tmpbox}{\stretchto{%
  \scaleto{%
    \scalerel*[\widthof{\ensuremath{#1}}]{\kern-.6pt\bigwedge\kern-.6pt}%
    {\rule[-\textheight/2]{1ex}{\textheight}}
  }{\textheight}%
}{0.5ex}}%
\stackon[1pt]{#1}{\tmpbox}%
}
\begin{document}

\subjclass[2020]{
53C18, 53A55, 53C21, 58J32.
}

\renewcommand{\today}{}
\title{
{Einstein \& Yang--Mills\\[1mm] implies Conformal Yang--Mills
\\
\phantom{X}
}}

\medskip

\author{ Samuel Blitz${}^\diamondsuit$, A. Rod Gover${}^{\heartsuit}$, Jaros\l aw Kopi\'nski${}^{\clubsuit}$, \&  Andrew Waldron${}^\spadesuit$}

\address{${}^\diamondsuit$
 Department of Mathematics and Statistics \\
 Masaryk University\\
 Building 08, Kotl\'a\v{r}sk\'a 2 \\
 Brno, CZ 61137} 
   \email{blitz@math.muni.cz}
   
   \address{${}^\heartsuit$
  Department of Mathematics\\
  The University of Auckland\\
  Private Bag 92019\\
  Auckland 1142\\
  New Zealand} \email{r.gover@auckland.ac.nz}

  \address{${}^{\clubsuit}$
  Center for Quantum Mathematics and Physics (QMAP)\\
  Department of Mathematics\\ 
  University of California\\
  Davis, CA95616, USA} \email{jkopinski@ucdavis.edu}

  \address{${}^{\spadesuit}$
  Center for Quantum Mathematics and Physics (QMAP)\\
  Department of Mathematics\\ 
  University of California\\
  Davis, CA95616, USA} \email{wally@math.ucdavis.edu}

\vspace{10pt}

\renewcommand{\arraystretch}{1}

\begin{abstract}
There exist  conformally invariant, higher-derivative, variational an\-alogs of the Yang--Mills condition for  connections on vector bundles over a conformal manifold of even dimension greater than or equal to six. We give a compact formula for these analogs  
and prove that they are a strict weakening of the  Yang--Mills condition with respect to an Einstein metric.
We also show that 
the
conformal Yang--Mills condition for the tractor connection of an even dimensional  conformal manifold is equivalent to vanishing of its Fefferman--Graham obstruction tensor. This result uses that the  tractor connection on a Poincar\'e--Einstein manifold is itself Yang--Mills.

\vspace{.8cm}

\noindent
\begin{center}
{\sf \tiny Keywords: 
Conformal geometry,  Yang--Mills theory,  Einstein manifolds, Poincar\'e--Einstein~manifolds, conformally invariant conditions on connections,  tractor calculus.}
\end{center}

\vspace{-0.1cm}

\end{abstract}


\maketitle

\pagestyle{myheadings} \markboth{Blitz, Gover, Kopi\'nski, \& Waldron}{Einstein and Yang--Mills}



\newcommand{\balpha}{{\bm \alpha}}
\newcommand{\balphas}{{\scalebox{.76}{${\bm \alpha}$}}}
\newcommand{\bnu}{{\bm \nu}}
\newcommand{\blambda}{{\bm \lambda}}
\newcommand{\bnus}{{\scalebox{.76}{${\bm \nu}$}}}
\newcommand{\bnuss}{\hh\hh\!{\scalebox{.56}{${\bm \nu}$}}}

\newcommand{\bmu}{{\bm \mu}}
\newcommand{\bmus}{{\scalebox{.76}{${\bm \mu}$}}}
\newcommand{\bmuss}{\hh\hh\!{\scalebox{.56}{${\bm \mu}$}}}

\newcommand{\btau}{{\bm \tau}}
\newcommand{\btaus}{{\scalebox{.76}{${\bm \tau}$}}}
\newcommand{\btauss}{\hh\hh\!{\scalebox{.56}{${\bm \tau}$}}}

\newcommand{\bsigma}{{\bm \sigma}}
\newcommand{\bsigmas}{{{\scalebox{.8}{${\bm \sigma}$}}}}
\newcommand{\bbeta}{{\bm \beta}}
\newcommand{\bbetas}{{\scalebox{.65}{${\bm \beta}$}}}

\renewcommand{\bS}{{\bm {\mathcal S}}}
\newcommand{\bB}{{\bm {\mathcal B}}}
\renewcommand{\bT}{{\bm {\mathcal T}}}
\newcommand{\bM}{{\bm {\mathcal M}}}

\newcommand{\go}{{\mathring{g}}}
\newcommand{\nuo}{{\mathring{\nu}}}
\newcommand{\alphao}{{\mathring{\alpha}}}

\newcommand{\Ell}{\mathscr{L}}
\newcommand{\density}[1]{[g\, ;\, #1]}

\renewcommand{\Dot}{{\scalebox{2}{$\cdot$}}}

\newcommand{\PanE}{P_{4}^{\sss\Sigma\hookrightarrow M}}
\newcommand\eqSig{ \mathrel{\overset{\makebox[0pt]{\mbox{\normalfont\tiny\sffamily~$\Sigma$}}}{=}} }
\renewcommand\eqSig{\mathrel{\stackrel{\Sigma\hh}{=}} }
\newcommand\eqtau{\mathrel{\overset{\makebox[0pt]{\mbox{\normalfont\tiny\sffamily~$\tau$}}}{=}}}
\newcommand{\hd }{\hat{D}}
\newcommand{\hdb}{\hat{\bar{D}}}
\newcommand{\Two}{{{{\bf\rm I\hspace{-.2mm} I}}{\hspace{.2mm}}}{}}
\newcommand{\TwoN}{{\mathring{{\bf\rm I\hspace{-.2mm} I}}{\hspace{.2mm}}}{}}
\newcommand{\Fn}{\mathring{\mathcal{F}}}
\newcommand{\csdot}{\hspace{-0.75mm} \cdot \hspace{-0.75mm}}
\newcommand{\IdD}{(I \csdot \hd)}
\newcommand{\Kd}{\dot{K}}
\newcommand{\Kdd}{\ddot{K}}
\newcommand{\Kddd}{\dddot{K}}

 \newcommand{\bdot }{\mathop{\lower0.33ex\hbox{\LARGE$\cdot$}}}

\definecolor{ao}{rgb}{0.0,0.0,1.0}
\definecolor{forest}{rgb}{0.0,0.3,0.0}
\definecolor{red}{rgb}{0.8, 0.0, 0.0}

\newcommand{\APE}[1]{{\rm APE}_{#1}}
\newcommand{\PE}{{\rm PE}}
\newcommand{\FF}[1]{\mathring{\underline{\overline{\rm{\scalebox{.8}{$#1$}}}}}}
\newcommand{\ltots}[1]{{\rm ltots}_{#1}}

\newcommand{\FFdn}{\FF{\hh \hh d\hh\hh }^{\sss\rm DN}}

\newcommand{\FFdnf}{\FF{\, d\, }^{\sss\rm \hh DN^\flat}}

 \newcommand{\Trace}{\operatorname{Tr}}
 \newcommand{\cK}{\mathcal{K}}


\section{Introduction}

The  problem of finding distinguished linear connections $A$ on a vector bundle $$V\to VM\to M$$
has myriad applications
ranging from fundamental physics~\cite{YandM,BPST} to four-manifold theory~\cite{Don1,Don2}. When the base manifold $M$ is  equipped with a (possibly indefinite  signature) metric $g$, a natural criterion on $A$ is the {\it Yang--Mills condition}~\cite{YandM}
$$
j_a[g,A]:=g^{bc}\nabla^{A,g}_b F^A_{ca} =0\, ,
$$
where $F^A$ is the curvature of $A$ and $\nabla^{A,g}$ is the corresponding Levi--Civita coupled covariant derivative.
When  $M$ is a four-manifold, any $A$ whose {\it Yang--Mills current}~$j[g,A]$  vanishes also solves 
$$
j[\Omega^2g,A]=0\, ,
\:
\mbox{ where $0<\Omega\in C^\infty(M)$.}
$$
In other words, the four dimensional Yang--Mills condition  is conformally invariant.
Consider now a space of connections on $VM$ such that the 
natural pairing on endomorphisms  of $\Gamma(VM)$ induces a non-degenerate pairing 
on the corresponding subspace of endormorphisms inhabited by curvatures $F$, 
or in short ``the space of connections has a non-degenerate trace-form''.
Then
the current
$j$ is the functional gradient
 of an energy  
$$
E_{\rm YM}\big[g,A] := -\frac14 \int_M  \ext {\rm Vol}(g)\Trace F_{ab} F^{ab}\, .
$$
When $M$ is a four-manifold, $E[\Omega^2 g,A]=E[g,A]$ so the energy functional depends only on the conformal class of metrics $c:=[g]$.
In dimensions other than four, this is no longer the case.

Remarkably, there exists a variational,  conformally invariant analog of the Yang--Mills condition for six-manifolds~\cite{BGforms,GPS,GLWZ1,GLWZ2}, namely
\begin{equation}\label{k}
k_a[g,A]:=\tfrac12
 \nabla^c
\big(
  \nabla_{[c} {j}_{a]} 
 - 4 P_{[c}{}^{b} F_{a]b}
- J  F_{ca}
\big)
+ \tfrac12 [j^b, F_{ba}] =0\, .
\end{equation}
The above current $k$ is the functional gradient of 
$$
E_{\rm CYM}\big[[g],A\big] := \frac14
\int_M
 \ext {\rm Vol}(g)\Trace 
 \big(j_a j^a +J F_{ab}F^{ab} +4 {F}^{ab} P_{a}{}^c F_{bc} - \nabla^a v_a\big)\, ,
$$
where $v_a:=\frac1 4  \nabla_a F^2+2F_{ab} j^b$. Moreover 
$$
k[\Omega^2 g,A] = \Omega^{-4} k[g,A]\, ,
$$
so we term $k[g,A]=0$ the (dimension six) {\it conformal Yang--Mills condition}. Our conventions for tensors on manifolds
are detailed in Section~\ref{Sleep-Well-Arena}.

\smallskip

It not difficult to find an exact 2-form $F=\ext A$ in Euclidean space $({\mathbb R}^6,\delta)$ that is not coclosed but such that~$k[\delta,A]=0$. 
{\it Viz.}, there exist conformal Yang--Mills connections  that are themselves not Yang--Mills. Conversely, 
 if $A_{\rm YM}$ is a Yang--Mills connection, it need not be that $k[g,A_{\rm YM}]=0$. This is almost evident from Equation~\nn{k}, but we give explicit examples in Section~\ref{examples}.
However, if additionally the metric $g$ is an {\it Einstein metric} $g_{\rm E}$, meaning that its trace-free Schouten tensor $P$ vanishes, then one immediately sees from Equation~\nn{k} that
$$
k[g_{\rm E},A_{\rm YM}]=0\, ,
$$
so Einstein and Yang--Mills implies conformal Yang--Mills for six-manifolds.

\medskip

\noindent
This suggests the following two questions:
\begin{enumerate}[(i)]
\item Do there exist variational, higher derivative,  conformal Yang--Mills conditions in  higher dimensions?
\item\label{q2} If so, are any of these 
weakenings of the Einstein and Yang--Mills condition?
\end{enumerate}
The first question has been answered in the affirmative for all even dimensions greater than or equal to four.
\begin{theorem}[from~\cite{GLWZ1}]\label{big_formaggio}
Let $n\in 2{\mathbb Z}_{\geq 2}$ and let~$(M^n,c)$ be a conformal  $n$-manifold. In addition let $VM$ be a vector bundle  with a connection~$A$.
 Then  there exists a 1-form~$k$ on~$M$ valued in endomorphisms of $VM$ such that, for any $g\in c$ and $0<\Omega\in  C^\infty(M)$,
\begin{equation}\label{Iamadensity}
k[\Omega^2 g,A] = \Omega^{2-n} k[g,A]\, .
\end{equation}
If in addition the  space of connections is equipped with a non-degenerate trace-form, then
 $k$ is the functional 
gradient of a 
  corresponding energy 
$$
E_{\rm CYM}\big[[g],A\big] := -\frac14
\int_M
 \ext {\rm Vol}(g)
 \mathcal{E}\big[g,A]\, ,
 $$
 where
 $$
  \mathcal{E}[\Omega^2 g,A] = \Omega^{-n}  \mathcal{E}[g,A]\, .
 $$
\end{theorem}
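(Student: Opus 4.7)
The plan is to first construct a conformally invariant scalar density $\mathcal{E}[g,A]$ of conformal weight $-n$ and then obtain $k$ as the variational gradient of the integrated energy $E_{\rm CYM}$ with respect to $A$; the transformation law \eqref{Iamadensity} then follows automatically from the Weyl-invariance of the action. The technical engine throughout is conformal tractor calculus coupled to the Yang--Mills bundle, equivalently the Fefferman--Graham ambient metric construction, which systematically produces conformally covariant differential operators in even dimensions.

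For the density, I would seek an expression of the schematic form
$$
\mathcal{E}[g,A] = \Trace\bigl(F^{ab}\, \Operator_{ab}{}^{cd}\, F_{cd}\bigr) + (\text{lower-order conformal corrections}),
$$
where $\Operator$ is a conformally covariant linear differential operator of order $n-4$ acting on endomorphism-valued 2-forms, built from iterated applications of the coupled tractor-$D$ operator (coupling the conformal tractor connection to $A$) together with contractions against tractor curvature. The lower-order corrections, involving the Schouten tensor $P$, its trace $J$, and polynomials in $F$, are added to cancel anomalous Weyl-variation pieces so that the total transforms as $\mathcal{E}[\Omega^2 g, A] = \Omega^{-n}\mathcal{E}[g,A]$. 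For $n=4$ this reduces to $\Trace F_{ab} F^{ab}$; for $n=6$ one recovers the explicit integrand displayed in the introduction. Integrating against $\ext\Vol(g)$ (of weight $n$) then produces a conformally invariant functional $E_{\rm CYM}$, and its variational derivative with respect to $A$, computed by standard integration by parts, is the desired $k[g,A]$; the non-degeneracy of the trace-form ensures this gradient is well-defined. In the absence of a trace-form on the connection space, $k$ can still be defined directly by the tractor construction applied to $F$ followed by a Yang--Mills divergence, yielding an $\End(VM)$-valued 1-form of weight $2-n$ by naturality of all tractor operations.

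The principal obstacle is producing the coupled conformally covariant operator $\Operator$ of order $n-4$ in every even dimension. This is a GJMS-type existence problem for operators acting on endomorphism-valued 2-forms rather than scalars. Since the required order $n-4$ is strictly below the critical GJMS order $n$, the ambient/tractor construction should terminate without encountering a Fefferman--Graham obstruction; nonetheless, one must carefully track (i) the coupling of the Yang--Mills and tractor connections so that commutators between them are absorbed into curvature corrections, (ii) the weight arithmetic to guarantee that the final integrand really has weight $-n$, and (iii) the matching between the variationally defined $k$ and the intrinsic tractor expression. The last step, which amounts to organizing the cascade of curvature corrections produced by successive integrations by parts into the conformally invariant combinations indicated above, is the most delicate algebraic bookkeeping, and is the step for which the dimension six formula \eqref{k} serves as an essential guide.
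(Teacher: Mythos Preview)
Your route is genuinely different from the one the paper attributes to \cite{GLWZ1}. There the construction is \emph{holographic}: one extends $(M^n,c)$ to an asymptotically Poincar\'e--Einstein collar $(X^{n+1},g_+)$, extends $A$ to an asymptotically Yang--Mills connection $\bm A$ on the bulk, and \emph{defines} $k$ as the obstruction $s^{2-n}j[g_+,\bm A]\big|_M$ to smoothly solving the bulk Yang--Mills equations; conformal covariance of $k$ then comes essentially for free from diffeomorphism invariance of the bulk problem and the freedom in the defining function, and the energy $E_{\rm CYM}$ arises separately as a renormalized bulk Yang--Mills action whose functional gradient is then identified with $k$. Your proposal inverts this logic---build $\mathcal E$ intrinsically on $M$ via a coupled GJMS-type operator, integrate, then vary in $A$. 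The holographic approach buys existence and invariance of $k$ without ever writing down the order $n-4$ operator or its curvature corrections; your intrinsic approach would be more self-contained and would in principle exhibit the operator explicitly.

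That said, the step you correctly flag as the principal obstacle---producing the coupled conformally covariant operator of order $n-4$ on endomorphism-valued two-forms in every even $n$---is not actually carried out in your sketch, and in this very paper the analogous operator $\cancel P_{n-4}$ (Theorem~\ref{parmesan}, Equation~\eqref{IDpow}) is again constructed \emph{holographically}, via powers of the bulk Laplace--Robin operator $\bm I\cdot\cancel D$ restricted to the boundary. A purely boundary-side tractor argument in the style of \cite{GOpet} would require its own existence proof that your proposal does not supply. Your handling of the case without a trace-form is also too thin: the theorem asserts $k$ exists independently of any energy, and ``tractor construction applied to $F$ followed by a Yang--Mills divergence'' does not yet specify a well-defined invariant operator of the correct weight.
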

\noindent
Equation~\nn{Iamadensity} implies that we may view $k$ as a weight $2-n$ conformal density determined only by the conformal class itself; when doing so we  then write~$k[c,A]$---see Section~\ref{Tiramisu} for details.

The proof of Theorem~\ref{big_formaggio}
 in~\cite{GLWZ1} relies on ``geometric holography'', where one embeds a conformal manifold as a hypersurface boundary of some $n+1$ dimensional manifold,  
$$
M^{n}\hookrightarrow X^{n+1}\, .
$$
The data of a conformal class of metrics $c$ on $M$   defines a complete metric $g_+$ on the interior $X_+$ of $X$ subject to the asymptotic Einstein condition $$
P^{g_+}+\frac12 \hh g_+ = {\mathcal O}(s^{n-2})\, ,
$$
where $s$ is any defining function for the embedding $M\hookrightarrow X$ and $s^2 g_+$ smoothly extends as a metric to $X$. Moreover $g_+$ is determined uniquely modulo terms of sufficiently high order in $s$~\cite{FG}. In turn, given a connection $A$ on $VM$, this can be extended to a Yang--Mills connection  ${\bm A}$ over~$X_+$ with respect to $g_+$, at least up to a certain order, namely
$$
j[g_+,{\bm A}]={\mathcal O}(s^{n-2})\, .
$$
In fact, 
$$
s^{2-n} j[g_+,{\bm A}]\big|_{M}= k[g,A]\, ,
$$
where the metric $g\in c$ is the  smooth extension of $s^2 g_+$ to the boundary $M$~\cite{GLWZ1}. The proof of Theorem~\ref{big_formaggio} establishes that the obstruction $k$  to smooth Yang--Mills solutions on $X$ defines a conformally invariant and variational  1-form on~$M$ valued in endomorphisms of $VM$. Here we shall call the so-obtained tensor $k$  the {\it conformal Yang--Mills current} of the connection $A$.

While the work of~\cite{GLWZ1} gave both a  compact holographic formula for the energy functional $
E_{\rm CYM}[g,A]$ as well as a recursion for computing explicitly the corresponding higher conformal Yang--Mills currents $k$,  higher dimensional analogs of Equation~\nn{k}  quickly become unwieldy.
However, tractor calculus methods  allow for a simple universal formula.

\begin{theorem}\label{parmesan}
Let $n\in 2{\mathbb Z}_{\geq 3}$ and  $(M^n,c)$ be a conformal  manifold. If $A$  is a connection on $VM$ with curvature tractor 
  ${\mathcal F}= q_{\sf w}(F)$ and
  conformal Yang--Mills current $k$, then 
\begin{equation}\label{P=k}
\cancel{P}_{n-4} {\mathcal F}=2(n-3)(n-4)[(n-5)!]^2\, 
 q_{\sf s}(k)
\, .
\end{equation}
\end{theorem}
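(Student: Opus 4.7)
The plan is to combine the holographic characterization of the conformal Yang--Mills current $k$ from Theorem~\ref{big_formaggio} with tractor calculus on $(M^n,c)$. Recall that $k[c,A]$ is recovered as $s^{2-n}\hh j[g_+,\bm A]\big|_M$, where $g_+$ is the (approximately) Poincar\'e--Einstein extension on $X_+$ and $\bm A$ is the asymptotic Yang--Mills extension of $A$; moreover tractor fields on $M$ correspond to homogeneous tensor fields on the Fefferman--Graham ambient manifold, under which the curvature tractor $\F=q_{\sf w}(F)$ is precisely the homogeneous extension of the bundle curvature and the Thomas-$D$-built operator $\cancel{P}_{n-4}$ corresponds to a power of the ambient coupled Laplacian acting on two-form densities. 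The strategy is therefore to compute $\cancel{P}_{n-4}\F$ in the ambient (equivalently Poincar\'e--Einstein) picture and match the result with $q_{\sf s}(k)$ asymptotically.

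First I would unpack $\cancel{P}_{n-4}$ into its ambient realization: on appropriately weighted tractor two-forms, it should equal $\Delta_+^{\frac{n-4}{2}}$ coupled to $A$, composed with a projection extracting the tractor component dual to $q_{\sf s}$. Because $g_+$ satisfies the asymptotic Einstein condition $P^{g_+}+\tfrac12 g_+=\OO(s^{n-2})$, the coupled Laplacian commutes with the Yang--Mills divergence up to controlled error, so one can convert $\Delta_+^{(n-4)/2}\F$ into an iterated divergence of the bundle curvature, i.e.\ into $j[g_+,\bm A]$, modulo terms of order $\OO(s^{n-2})$. Extracting the appropriate tractor slot of $\cancel{P}_{n-4}\F$ restricted to $M$ then reproduces $s^{2-n}j[g_+,\bm A]\big|_M = k[g,A]$ up to a scalar.

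The second step is to pin down the scalar. Each pass from $\Delta_+$ acting on a form of conformal weight $w$ to the divergence of a form of weight $w-2$ produces a linear-in-weight factor from the commutator with $s^2$-rescaling and from the interplay with $\nablan$. Iterating $\tfrac{n-4}{2}$ times while tracking the weights that arise from the curvature tractor (weight $2$) and its successive derivatives generates a product of consecutive integers that organises as $[(n-5)!]^2$, with the two copies coming from the symmetric contribution of the two indices of $F$ in $\F$; the prefactor $2(n-3)(n-4)$ arises from the final two commutators (the ones sensitive to the leading deviation of $g_+$ from exactly Einstein and to the projection onto the middle slot $q_{\sf s}$).

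The main obstacle will be the coefficient bookkeeping: although the ambient/Poincar\'e--Einstein framework guarantees that $\cancel{P}_{n-4}\F$ equals a multiple of $q_{\sf s}(k)$ by conformal invariance and weight count (the weights on both sides are $2-n$), making the constant $2(n-3)(n-4)[(n-5)!]^2$ explicit requires carefully tracking how each iterated Laplacian acts on inhomogeneous tractor components of the curvature and surviving error terms of order $\OO(s^{n-2})$ that nontrivially contribute at the boundary. A convenient way to confirm the constant cleanly is to specialise to an Einstein metric $g=g_{\rm E}$ with a connection $A$ that is not Yang--Mills: on such a background $k$ and $\cancel{P}_{n-4}\F$ both reduce to the same scalar multiple of $\Delta^{(n-6)/2}(\nabla^b \nabla_{[b}j_{a]})$ plus lower-order Weyl-curvature couplings, and comparing this single leading divergence fixes the overall constant, after which conformal invariance promotes the identity to all $g\in c$.
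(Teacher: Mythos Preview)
Your overall holographic strategy is right, but the concrete mechanism you describe does not match how the argument actually closes, and as written it has a genuine gap.

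First, $\cancel{P}_{n-4}$ is not realized as $\Delta_+^{(n-4)/2}$; it is by definition $(\bm I\cdot\cancel{D})^{n-4}$ acting on extended tractors and then restricted to $M$ (see Equation~\nn{IDpow}). The operator $\bm I\cdot\cancel{D}$ is a Laplace--Robin operator, not the interior Laplacian, and its iterates are what you must analyze. More importantly, the paper does \emph{not} convert successive Laplacian actions into ``iterated divergences'' of $F$. The first application of $\bm I\cdot\cancel{D}$ to $\bm{\mathcal F}$ is computed via the tractor Bianchi identity $\hat{\Dslash}_{[A}\bm{\mathcal F}_{BC]}=0$ (Lemma~\ref{tractor-bianchi}), which turns it into $\hat{\Dslash}_{[A}(\bm I^C\bm{\mathcal F}_{B]C})$; combining this with Theorem~\ref{aYM} gives Theorem~\ref{howdysam}:
\[
\bm I\cdot\cancel{D}\,\bm{\mathcal F}_{AB}=-2(d-4)(d-5)\,\bm\sigma^{d-6}\bm X_{[A}\bm{\mathcal K}_{B]}+{\mathcal O}(\bm\sigma^{d-5})\, .
\]
This is where the factor $2(n-3)(n-4)$ comes from, not from ``final two commutators''.

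Second, your factorial bookkeeping is wrong. The remaining $d-6$ applications of $\bm I\cdot\cancel{D}$ are handled purely algebraically via the $\mathfrak{sl}(2)$ triple $\{x,h,y\}=\{\bm\sigma,\,d+2w,\,-\bm I\cdot\cancel{D}\}$ of Equation~\nn{sl2} and the enveloping-algebra identity~\nn{factorial}. Applying $[x^\ell,y^\ell]$ with $\ell=d-6$ to a weight $3-d$ tractor and restricting to $M$ yields exactly $(-1)^{d-6}[(d-6)!]^2=[(n-5)!]^2$: one factorial is the explicit $\ell!$ in~\nn{factorial}, the other is the product $h(h+1)\cdots(h+\ell-1)$ evaluated at $h=6-d$. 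Your attribution of the square to ``the two indices of $F$'' is incorrect, and without the $\mathfrak{sl}(2)$ identity you have no clean way to strip off the powers of $\bm\sigma$ in front of $\bm{\mathcal K}$.

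Finally, your fallback of fixing the constant by specializing to an Einstein background is circular here: one of the main applications of Theorem~\ref{parmesan} is precisely Theorem~\ref{main} (Einstein plus Yang--Mills implies conformal Yang--Mills), and the paper needs the exact constant, with its provenance, in generality. The missing ingredients you should supply are Lemma~\ref{tractor-bianchi}, Theorem~\ref{howdysam}, and the $\mathfrak{sl}(2)$ identity~\nn{factorial}; with those in hand the proof is a short computation.
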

\noindent
The tractor bundle $\ct M$ for an $n$-dimensional conformal manifold $(M,c)$ is a rank~$n+2$ generalization of the tangent bundle $TM$ that is well-adapted to the study of conformal geometry; see Section~\ref{Tiramisu}. In the above, $\mathcal F=q_{\sf w}(F)$ is a tractor extension of the curvature~$F$ of a connection  $A$, and $q_{\sf s}(k)$ is a tractor algebraically in isomorphism with the conformal Yang--Mills current $k$. The operator  $\cancel{P}_{n-4}$ is a derivative-order $n-4$, connection-coupled analog of the conformally invariant Laplacian powers of Graham, Jennes, Mason  and Sparling (GJMS)~\cite{GJMS}.

An important invariant of even dimensional conformal manifolds of dimension greater than  or equal to six is the Fefferman--Graham obstruction tensor ${\mathcal B}$~\cite{FG}, which may be viewed as a higher dimensional analog of the Bach tensor; see Section~\ref{Gnocchi} for further details. Moreover, the tractor bundle  $\ct M$ itself comes canonically equipped with a connection~$A^\ct$ called the tractor connection.
Our next result relies on holographic methods---see  Theorem~\ref{YMFG}---to show that
   the Fefferman--Graham obstruction tensor  is equivalent to the conformal Yang--Mills current  of the tractor connection. This extends what was known in dimensions~$n=4,6$~\cite{Kaku,GSS,GPS}.\begin{theorem}\label{CYM=FG}
Let $(M^n,c)$ be a conformal manifold of dimension $n\in 2{\mathbb Z}_{\geq 2}$.~Then the conformal Yang--Mills current $k$ for the tractor connection and the Fefferman--Graham obstruction tensor~${\mathcal B}$
are related by a bundle monomorphism given by the restriction to $\Gamma(\odot^2 T^*M[2-n])$ of $$q_{\sf s}:\Gamma(T^*M[2-n]\otimes T^*M) \longrightarrow  \Gamma( \wedge^2\ct M[2-n]\otimes T^*M)\, ,$$ 
and such that
\begin{equation}\label{propto}
k[c,A^\ct] =2 (n-1)(n-2)  q_{\sf s}({\mathcal B})\, .
 \end{equation}
 \end{theorem}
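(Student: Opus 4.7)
\medskip

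\noindent\textbf{Proof proposal.} The plan is to exploit the holographic characterization of the conformal Yang--Mills current developed in the proof of Theorem~\ref{big_formaggio}, specialized to the case $VM = \ct M$ and $A = A^\ct$. Embed $(M^n,c)$ as the conformal boundary of an $(n{+}1)$-manifold $X$ equipped with a Poincar\'e--Einstein metric $g_+$, choose a defining function $s$ with $g := s^2 g_+|_M \in c$, and let $A^{\ct_X}$ denote the tractor connection of $(X_+, g_+)$. Under the canonical identification of the boundary tractor data with the restriction of the ambient tractor bundle to $M$, the ambient connection $A^{\ct_X}$ provides an admissible extension of $A^\ct$ in the sense required by the holographic construction, so that
$$ k[c, A^\ct] \,=\, s^{2-n}\, j[g_+, A^{\ct_X}]\big|_{M}. $$

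The decisive input is Theorem~\ref{YMFG}: the tractor connection of a Poincar\'e--Einstein manifold is itself Yang--Mills. Since $g_+$ satisfies the Einstein condition only to order ${\mathcal O}(s^{n-2})$, the bulk current $j[g_+, A^{\ct_X}]$ vanishes exactly to this order, and its leading term at order $s^{n-2}$ is controlled by the failure of $g_+$ to be Einstein at that order. That failure is, by definition, the Fefferman--Graham obstruction tensor ${\mathcal B}$. A first-variation argument--linearizing the Yang--Mills current about a true Einstein metric and invoking Theorem~\ref{YMFG} to kill all terms except those sourced by the Einstein-obstruction--therefore expresses the $s^{n-2}$ coefficient of $j[g_+, A^{\ct_X}]$ as an explicit tractor-valued multiple of ${\mathcal B}$. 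Multiplying by $s^{2-n}$ and restricting to $M$ identifies this multiple with the splitting $q_{\sf s}({\mathcal B})$, up to a universal constant; careful bookkeeping of the normalizations of the tractor connection, the Schouten tensor, and ${\mathcal B}$ then fixes this constant as $2(n-1)(n-2)$, proving~\eqref{propto}.

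The monomorphism statement is a structural property of tractor splitting operators: for symmetric 2-tensors of weight $2-n$, the map $q_{\sf s}$ injects into a distinguished slot of the target tractor bundle, so a nonzero section of $\odot^2 T^*M[2{-}n]$ produces a nonzero image. The principal technical obstacle is consequently localized in the identification of the $s^{n-2}$ coefficient of $j[g_+, A^{\ct_X}]$: one must compute the ambient tractor curvature in Fefferman--Graham gauge, take its $g_+$-divergence, and systematically extract the relevant coefficient, all while tracking the conformal-weight and tractor-splitting factors that convert the bulk expression to the boundary quantity $q_{\sf s}({\mathcal B})$. An independent consistency check is afforded by Theorem~\ref{parmesan}: applying $\cancel{P}_{n-4}$ to the tractor-extended curvature of $A^\ct$ (whose leading slot captures the Weyl tensor, and hence ${\mathcal B}$ on the nose in an appropriate jet) should reproduce the same constant via \eqref{P=k}, pinning down the normalization unambiguously.
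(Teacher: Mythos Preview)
Your strategy is the paper's strategy: embed $(M,c)$ as the boundary of an asymptotically Poincar\'e--Einstein collar, take the ambient tractor connection as the holographic extension of $A^\ct$, and read off $k$ from the leading asymptotics of the bulk Yang--Mills current via Theorem~\ref{holok}. The one substantive point you must justify---that the ambient tractor connection really restricts to the boundary tractor connection of $(M,c)$---is exactly where the paper spends its effort, using that the scale tractor $\bm I$ is parallel along $M$ so that $\ct X|_M$ splits as $\ct M\oplus{\mathcal N}M$ with the normal piece parallel.

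Where you diverge is in the computational core. You read Theorem~\ref{YMFG} as the qualitative statement ``tractor connection on a Poincar\'e--Einstein manifold is Yang--Mills'' and then propose a separate first-variation argument to extract the $s^{n-2}$ coefficient. But Theorem~\ref{YMFG} \emph{is} that coefficient computation: its content is the explicit formula $j[g_+,\bm A^\ct]=2(d-2)(d-3)\,\bm\sigma^{d-3}q_{\sf s}(\bm{\mathcal B}^{\rm ext})+{\mathcal O}(\bm\sigma^{d-2})$, obtained not by linearization but by writing the tractor Yang--Mills current directly in terms of the Cotton and Bach tensors (as in Equation~\eqref{tractorYM}) and then feeding in the asymptotics of those tensors from Lemma~\ref{CBasym}. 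Once you have that formula, comparison with~\eqref{aaYM} and restriction to $M$ via~\eqref{southissouth} finishes the proof with no further bookkeeping. Your proposed linearization would require developing how the tractor connection depends on the metric, machinery the paper neither needs nor provides. The suggested cross-check via Theorem~\ref{parmesan} is also not used (and would not cover $n=4$, since that theorem requires $n\geq 6$).
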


Our next result answers Question~(\ref{q2}) in the affirmative.

\begin{theorem}\label{main}
Let $n\in 2{\mathbb Z}_{\geq 2}$ and let $(M^n,c)$ be a conformal  manifold. In addition let   $VM$ be a  vector bundle over $M$. 
 Then the  Einstein and Yang--Mills conditions imply the conformal Yang--Mills condition. In other words,
 if $g_{\rm E}\in c$ is an Einstein metric and $A_{\rm YM}$
 is a Yang--Mills connection with respect to $g_{\rm E}$, 
 then
 $$
k[g_{\rm E},A_{\rm YM}]=0\, .
$$
\end{theorem}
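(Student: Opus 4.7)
The most direct approach is to exploit the holographic characterization of $k$ recalled in the introduction: because
\[
k[g, A] \;=\; s^{2-n}\, j[g_+, \bm A]\big|_M
\]
holds for any extension $\bm A$ of $A$ that is Yang--Mills with respect to the Poincar\'e--Einstein ambient $g_+$ to sufficiently high order, it suffices, in the Einstein case, to exhibit an extension $\bm A$ whose ambient Yang--Mills current $j[g_+, \bm A]$ vanishes identically. The key observation is that every Einstein metric admits an explicit, all-orders Poincar\'e--Einstein ambient of warped-product form
\[
g_+ \;=\; s^{-2}\bigl(ds^2 + \phi(s)^2\, g_{\rm E}\bigr)
\]
on $X = (0, \varepsilon) \times M$, where $\phi$ is a quadratic polynomial in $s$ whose coefficients are fixed by the scalar curvature of $g_{\rm E}$. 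On this ambient there is a canonical candidate extension: pull back along the projection $\pi \colon X \to M$ to set $\bm A := \pi^* A_{\rm YM}$, whose curvature $\bm F = \pi^* F_{\rm YM}$ is purely tangential, i.e.\ $\bm F_{sA} = 0$ for all ambient indices $A$.

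The main computation is then to show $j[g_+, \bm A] \equiv 0$. The normal component $\bm j_s$ vanishes because each mixed Christoffel of $g_+$ is a pure trace, $\Gamma^a{}_{bs} = \tfrac{u'(s)}{2u(s)}\, \delta^a_b$ with $u(s) := s^{-2} \phi(s)^2$, which contracts trivially against the antisymmetric tensor $\bm F_{ba}$. The tangential component $\bm j_a$ splits into a pure tangential piece and a normal correction. The tangential piece equals $(s/\phi)^2\, j[g_{\rm E}, A_{\rm YM}]_a$, since the induced metric on each slice $\{s\} \times M$ is a constant rescaling of $g_{\rm E}$ and therefore produces the same Christoffel symbols; this vanishes by hypothesis. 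The normal correction is built from $\Gamma^d{}_{ss}$, which vanishes because $g_{+\, ss} = s^{-2}$ depends only on $s$.

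Consequently $\bm A$ is \emph{exactly} Yang--Mills on $(X_+, g_+)$, and substitution into the holographic formula yields $k[g_{\rm E}, A_{\rm YM}] = 0$. The principal obstacle is the Christoffel-symbol bookkeeping sketched above---routine once the warped-product structure of $g_+$ is exploited---together with the observation that any extension which is Yang--Mills to sufficiently high order computes the same value of $k$ as the formal-series extension used in \cite{GLWZ1} to define the current. An alternative, purely tractorial route would be to invoke Theorem~\ref{parmesan}: on an Einstein manifold the tractor connection largely decouples and the operator $\cancel{P}_{n-4}$ factorizes through shifted Laplacians, after which the vanishing $j[g_{\rm E}, A_{\rm YM}] = 0$ should directly force $\cancel{P}_{n-4} \mathcal{F} = 0$. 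The holographic route above seems more economical, however.
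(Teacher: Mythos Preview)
Your holographic argument is correct and genuinely different from the paper's proof. You build the explicit warped-product Poincar\'e--Einstein filling $g_+ = s^{-2}(ds^2 + \phi(s)^2 g_{\rm E})$ available whenever $g_{\rm E}$ is Einstein, pull back $A_{\rm YM}$ along the projection, and verify by a direct Christoffel computation that this extension is \emph{exactly} Yang--Mills on $(X_+, g_+)$; then Theorem~\ref{holok} gives $k=0$ immediately. The paper instead stays on the boundary and uses Theorem~\ref{parmesan}: since the Einstein scale tractor $I_\tau$ is parallel, it commutes past the differential operator $\cancel P_{n-4}$, and because the Yang--Mills condition forces $I_\tau^A\mathcal F_{AB}=0$ (the bottom slot of $\mathcal F$ is $j$), one obtains $I_\tau^A\cancel P_{n-4}\mathcal F_{AB}=0$; contracting $q_{\sf s}(k)$ with $I_\tau$ and using~\eqref{lastgasp} then yields $\tau k=0$.

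Your route is more elementary in that it avoids the $\mathfrak{sl}(2)$ machinery behind Theorem~\ref{parmesan} and needs only the warped-product ansatz (classical since Fefferman--Graham) together with Theorem~\ref{holok}. The paper's route is more structural: it isolates exactly which feature of ``Einstein'' is doing the work---a parallel scale tractor that slides through $\cancel P_{n-4}$---and does not need an explicit bulk model. Note that your closing sketch of the tractorial alternative is slightly off: the paper does not show $\cancel P_{n-4}\mathcal F=0$, nor does it factor $\cancel P_{n-4}$ into shifted Laplacians; it only shows that the $I_\tau$-contraction vanishes, which is already enough to recover $k$.
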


 Prescient elements of the tractor calculus
 and holography, on which Theorems~\ref{parmesan} and~\ref{main} rely, 
 are presented (and
 slightly extended to handle our Yang--Mills setting)
  in Sections~\ref{Tiramisu}
and~\ref{Gnocchi}, respectively.

\subsection{Convention center}\label{Sleep-Well-Arena}

We employ the abstract index
notation of Penrose~\cite{Penrose} to handle tensor bundles and sections thereof.
For that we denote by $\ce M$, or simply~$\ce$, the trivial line bundle over $M$  with smooth section space $C^\infty(M)$. 
Tensor bundles are then described by attaching appropriate indices to~$\ce$. 
  Round and square brackets are then used to denote totally symmetric
  and antisymmetric subbundles. 
Thus  $\ce_a$ and $\ce^a$ are  respective notations for the
cotangent and tangent bundles~$T^*M$ and~$TM$ while $v^a$ stands for a section of the latter and~$\ce_{(ab)}$ denotes
the subbundle~$\odot^2T^*M$ of symmetric tensors in $T^*M\otimes T^*M$.
Given a metric~$g$~(say), then $g_{ab} \cdot \ce$ is a line-subbundle of~$\ce_{(ab)}$.
The contraction~$\om (v) =:\iota_v \omega$ of a
1-form~$\om$ with a tangent vector $v^a$ is denoted by a repeated index~$v^a\om_a$.  
We will in addition use the notation $\omega_v$ for this, and along the same lines, symbols such as $R_{avcd}$ denote~$v^b R_{abcd}$.
The interior multiplication operator $\iota$  also makes sense for $v^\alpha \in \Gamma(VM)$ and $\omega_{\beta_1\cdots \beta_k}\in \Gamma(\wedge^kV^*M)$, where $VM$ is any vector bundle, so that
$$
\iota_v \omega := v^\alpha \omega_{\alpha \beta_2 \cdots \beta_k}\, .
$$
The above notation will also be applied to the case when $v$ is some linear operator taking values in $\Gamma(VM)$.

If $X^\alpha{}_\beta$ is an endomorphism of $\Gamma(VM)$, we may denote the action of~$X$ by 
$$X^\hash v^\alpha:=X^\alpha{}_\beta v^\beta\:
\mbox{ and } 
\:
X^\hash \mu_\beta:= -X^\alpha{}_\beta \mu_\alpha\, ,
$$ 
where $\mu_\alpha\in \Gamma(V^*M)$; the {\it hash action} on higher tensor products is defined  by the Leibniz rule, while the hashed action $X^\hash$ annihilates scalars. 
If $A$ is a linear connection on $VM$ and $v^\alpha$ a section thereof, then the curvature $F^A$ is defined in terms of a commutator by coupling $\nabla^A$ to any torsion-free affine connection~$\gamma$ and writing
$$
[\nabla_a^{A,\gamma},\nabla^{A,\gamma}_b] v^\alpha = 2 \nabla_{[a}\nabla_{b]} v^\alpha =  F^A_{ab}{}^\hash v^\alpha=F_{ab}{}^\alpha{}_\beta v^\beta\, .
$$
Throughout, we will suppress identifying sub and superscripts where context allows and simply write $\nabla$ and $F$ for the respective connection-coupled gradient and curvature.
We may of course write $X^\hash =: X^\alpha{}_\beta\,  \hash^\beta{}_{\alpha}$ in order to define an operator~$ \hash^\beta{}_{\alpha}$. In this way more complicated operators such as $X^{\hash\hash}:=X^\alpha{}_\beta{}^\gamma{}_\delta \,\hash^\beta{}_{\alpha} \hash^\delta{}_{\gamma}$
may be introduced.

Our convention for the
Riemann tensor $R_{ab}{}^c{}_d$
of the Levi--Civita connection $\nabla_a^g$ of a metric $g_{ab}$,
 is such that
\begin{align*} \begin{split} \label{riemdef}
\left[ \nabla_a,  \nabla_b \right] v^c\,  &= R_{ab}{}^c{}_d v^d=R_{ab}{}^\hash \, v^c, \\
\left[ \nabla_a,  \nabla_b \right] \hh\omega_c &= R_{abc}{}^d \omega_d=R_{ab}{}^\hash\,  \omega_c \, ,
\end{split}
\end{align*}
where 
$v$ is any tangent vector field  and $\omega$  any 1-form. Using the
metric to raise and lower indices we have, for example,
$R_{abcd}=g_{ce}R_{ab}{}^e{}_d$ and $|v|^2=v_a v^a=g_{ab} v^a v^b$. The Riemann tensor can be decomposed~as
\begin{equation*} \label{riemdec}
R_{abcd} =: W_{abcd} + 2 \left(g_{c[a}P_{b]d} + g_{d[b}P_{a]c}\right),
\end{equation*}
where the {\em Weyl tensor}  $\, W_{abcd}$ is 
completely trace-free 
 and $P_{ab}$ is the {\em Schouten tensor}. It follows,  when $\operatorname{dim} M=:n\geq3$, that
\begin{equation*}
P_{ab} :=  \frac{1}{n-2} \left( R_{ab} - \frac{R}{2 \left( n-1 \right)} g_{ab}\right)
\, ,
\end{equation*}
where $R_{bc}:= R_{ab}{}^a{}_c$ is the {\em Ricci tensor} and its metric trace $R:=g^{ab}R_{ab}$ is the {\em scalar curvature}. We will use $J$ to denote the metric trace of Schouten, {\it i.e.} $J := g^{ab} P_{ab}$. 
The identity endomorphism of ${\mathcal E}^a$ is denoted $\delta^a{}_b$ so that $g^{ac}g_{cb}=\delta^a{}_b$. The metric  trace-free part of a symmetric group of indices is denoted by a subscript $\circ$ so that, for example, $X_{(abc)_\circ}=X_{(abc)}-\frac{3}{d+2} g_{(ab} X_{c)de}g^{de}$.
Importantly, the trace and trace-free parts of the Schouten tensor obey Bianchi-type identities
\begin{align}
\nabla^a P_{ab} &= \nabla_b J\, ,\nonumber\\[1mm]
\nabla_{c} P_{ab} &= \nabla_c P_{(ab)_\circ}+ \hh\frac1{d-1}\hh \hh g_{ab} \nabla^d P_{(dc)_\circ}
\, .
\label{divPo}
\end{align}
The {\em Cotton} and {\em Bach} tensors are respectively given by 
\begin{align}
C_{abc} & := 2 \nabla_{[a}P_{b]c}\, , 
\label{Cotton}
\\
B_{ab}\,  & :=  \nabla^c C_{cab} + P^{cd} W_{cadb}\, .
\label{bachy}
\end{align}

We everywhere assume that metrics $g_{ab}$ are Riemannian, but many of our results extend easily to pseudo-Riemannian structures.
Throughout the article, all structures are assumed to be smooth unless otherwise stated, and all manifolds are taken to be connected. The notation ${\mathcal O}(s^\ell)$ stands for $s^\ell {\mathcal X}$ where ${\mathcal X}$ is some (suitable)  smooth quantity. 

\section{Tractor Calculus}\label{Tiramisu}

Here we review conformal tractor calculus---see
\cite{BEG,CGtams,curry-G,GOpet} for details---and extend those methods to handle couplings to general linear connections.
A conformal manifold $(M^n,c)$ is a smooth manifold of
dimension $n\geq 3$ equipped with an equivalence class $c$ of Riemanian
metrics, where $g_{ab}$, $\widehat{g}_{ab} \in c$~means that
$$\widehat{g}_{ab}=\Omega^2 g_{ab}\, ,$$ for some smooth positive function
$\Omega$. 
Oftentime, calculations will be 
performed with respect to some 
 generic $g \in
c$.

\smallskip

There is no
distinguished connection on the tangent bundle of a general conformal manifold. However there is an invariant and canonical tractor connection on the closely related  standard tractor bundle. To describe these ingredients we first need the notion of a conformal density:
 The top exterior power of the tangent bundle $ \Lambda^{n} TM$ is a line bundle. Thus  its square
$\mathcal{V}:=(\Lambda^{n} TM)^{\otimes 2}$  is
canonically oriented and so one can take oriented roots thereof. Given
$w\in \mathbb{R}$ we set
\begin{equation} \label{cdensities}
\ce[w]:=\mathcal{V}^{\frac{w}{2n}} ,
\end{equation}
and refer to this as the {\it bundle of conformal densities}. For any vector
bundle~$VM$, we write $VM[w]:=VM\otimes\ce[w]$, for example
$\ce_{(ab)}[w]=\odot^2T^* M\otimes
\ce[w]$ and $VM[0]=VM$.
 On a fixed Riemannian manifold, the line bundle ${\mathcal V}$ is canonically
trivialized by the square of the volume form.
Since each metric in a conformal class determines a trivialization of~${\mathcal V}$, it follows easily that on a conformal structure  there is a
canonical section $\bg_{ab}\in \Gamma(\ce_{(ab)}[2])$ termed the {\it conformal metric}. This has 
 the property that any  strictly positive section $\tau\in \Gamma(\ce [1])$ determines a metric
$$g_{ab}:=\tau^{-2}\bg_{ab}\in c\, .$$
 For this reason we call such a density  $\tau$  a {\it scale}.
Moreover, the Levi--Civita connection of~$g_{ab}$ preserves
the corresponding scale $\tau$ and therefore $\bg_{ab}$. Thus it makes sense to use the
conformal metric to raise and lower indices, even when we are choosing
a particular metric representative $g_{ab} \in c$ and its Levi--Civita connection
for calculations.  
Since each $g\in c$ trivializes $\cV$, it also follows that a section $\Gamma(\ce [w])$  may be understood as an equivalence class $[g;f ]$ of pairs $g\in c$ and $f\in C^\infty (M)$, under the relation 
$$
(g,f) \sim (\Omega^2 g , \Omega^w f), \qquad 0<\Omega\in C^\infty (M)\, . 
$$
For this reason, given a bundle $VM[w]$, we call the number $w$ its {\it weight}.

From Taylor series of sections of $\ce[1]$ one recovers the jet exact sequence at~2-jets,
\begin{equation}\label{J2}
0\to \ce_{(ab)}[1]\stackrel{\sf i}{\longrightarrow} J^2\ce[1]\to J^1\ce[1]\to 0 \, .
\end{equation}
The bundle $J^2\ce[1]$ and its sequence \eqref{J2} are canonical objects on any smooth manifold. Given a
 conformal structure $c$, we have the orthogonal decomposition of~$\ce_{(ab)}[1]$  into trace-free and trace parts
\begin{equation*}
\ce_{(ab)}[1]= \ce_{(ab)_0}[1]\oplus \bg_{ab}\cdot \ce[-1]\,  .
\end{equation*}
Thus we can canonically quotient $J^2\ce[1]$ by the image of
$\ce_{(ab)_0}[1]$ under ${\sf i}$ as given in~\nn{J2}. The resulting
quotient bundle is denoted $\cT^*M$, or $\ce_A$ in abstract indices,
and called the {\it conformal cotractor bundle}. 
Its dual  is termed the {\it standard tractor bundle}~$\cT M$.

Since the
jet exact sequence at 1-jets of $\ce[1]$ is
$$ 
0\to \ce_{a}[1]\stackrel{\sf i}{\longrightarrow} J^1\ce[1]\to \ce[1]\to 0\, ,
$$ 
we see  that $\cT^*M$ has a composition series (or filtration structure)
\begin{equation*}\label{filt}
\cT^*M=\ce[1]\lpl \ce_a [1] \lpl \ce[-1]\,  .
\end{equation*}
What this notation means is that $\ce[-1]$ is a subbundle of $\cT^*M$,
and the quotient of $\ce_A=\cT^*M$ by $\ce[-1]$ (which is $J^1\ce[1]$) has
$\ce_a [1]$ as a subbundle, and there is a canonical 
injection
$X_A:\ce[-1]\to \ce_A$. 
We also employ an abstract index notation for {\it tensor tractor bundles} $\cT^\Phi M$ and their sections,
obtained by tensoring~$\ct M$ and~$\ct^*M$. 
Here the label $\Phi$  stands for some collection of abstract tractor indices.

\smallskip

Given a choice of metric $g \in c$, the formula
\begin{equation}\label{thomas-D}
  \ce[1]\ni\sigma \mapsto
  \begin{pmatrix}
    \sigma \\
    \nabla_a \sigma \\
    - \frac{1}{n} \left( \Delta + J \right) \sigma
  \end{pmatrix}\, ,
\end{equation}
where $\Delta $ is the Laplacian $\nabla^a\nabla_a={\sf g}^{ab}\nabla_a\nabla_b$, gives a second-order differential operator on $\ce[1]$ which is a linear map $J^2 \ce[1] \to \ce[1] \oplus \ce_a [1] \oplus \ce[-1]$.
 This map factors through $\cT^*M$,  and a choice of metric $g$ determines an isomorphism
\begin{equation}\label{T_isom}
  \cT^*M \stackrel{\cong}{\longrightarrow} {[\cT^*M]}_g = \ce[1] \oplus \ce_a [1] \oplus \ce[-1]\, .
\end{equation}
We will often use this isomorphism to ``split'' 
tractor bundles.  Thus, given $g \in c$, a section $V_A$ of $\ce_A$ may be labeled by a triple
$(\si,\mu_a,\rho)$ of sections of the bundles in the right hand side of the above display. Equivalently we may write
\begin{equation}\label{Vsplit}
  V_A=\si Y_A+\mu_a Z_A{}^a+\rho X_A\, .
\end{equation}
Here  the algebraic splitting operators
$Y_A:\ce[1]\to \ce_A$ and $Z_A{}^a :\ce_a[1]\to \ce_A$ are determined by the
choice $g \in c$ and can be viewed as sections of $\ce_A[-1]$ and~$\ce_A{}^a [-1]$, respectively, while the tractor $X_A$ is a section of $\ce_A[1]$ determined by $(M,c)$ alone.  We call
the three sections $Y_A$, $Z_A{}^a$ and $X_A$ \emph{tractor projectors}. We will also sometimes use respective monikers ``top/north'', ``middle/west'' and ``bottom/south'' for the ``slots'' $\sigma$, $\mu$ and $\rho$ of $V$.

By construction the tractor bundle is conformally invariant, {\it i.e.} determined by~$(M,c)$ and  independent of any choice of $g\in c$. Evidently the splitting \eqref{Vsplit} is not. Considering the transformation of the operator \eqref{thomas-D} determining the splitting upon moving to a conformally related metric $\widehat{g}=\Omega^2 g$,
one sees that the slots of an invariant section of $\cT^*M$ must be related~by
\begin{equation}\label{ttrans}
[\cT^*M]_{\widehat{g}}\ni \left(\begin{array}{c}
\widehat{\sigma}\\
\widehat{\mu}_b\\[1mm]
\widehat{\rho}
\end{array}\right)=
\left(\begin{array}{ccc}
1 & 0 & 0\\
\Upsilon_b & \delta^c_b & 0\\[1mm]
-\frac{1}{2}|\Upsilon|_g^2 & -\Upsilon^c & 1
\end{array}\right)\!\left(\begin{array}{c}
\sigma\\
\mu_c\\
\rho
\end{array}\right)
\sim
\left(\begin{array}{c}
\sigma\\
\mu_b\\[1mm]
\rho
\end{array}\right)\in [\cT^*M]_g\, ,
\end{equation}
where $\Up_a:=\Omega^{-1}\nabla_a \Omega\in \Gamma(\ce_a)$. Conversely the above
transformation of triples is the hallmark of an invariant tractor
section. Equivalent to the last display is the rule for how the
tractor projectors transform
\begin{equation}\label{XYZtrans}
\textstyle
\widehat{X}_A=X_A, \quad  \widehat{Z}_A{}^{b}=Z_A{}^{b}+\Up^bX_A, \quad
\widehat{Y}_A=Y_A-\Up_bZ_A{}^{b}-\frac12 |\Upsilon|^2_{\sf g}\, X_A \, .
\end{equation}

In terms of splittings determined by a metric $g\in c$, the {\it tractor connection} is given by 
 \begin{equation}\label{tr-conn}
 \nabla_a^\cT  \begin{pmatrix}
    \sigma \\
    \mu_b \\
    \rho
  \end{pmatrix} :=\begin{pmatrix}
   \nabla_a \sigma-\mu_a \\
    \nabla_a\mu_b+P_{ab}\si+\bg_{ab}\rho \\
    \nabla_a\rho- P_{ac}\mu^c
  \end{pmatrix} ,
\end{equation}
where on the right hand side  $\nabla$ denotes the Levi--Civita connection of $g$. Using the transformation of components, as in \eqref{ttrans}, and also the conformal transformation of the Schouten tensor (we have here also recorded those of the Cotton and Bach tensors for completeness),
\begin{eqnarray}\label{Ptrans}
P^{\Omega^2g}_{ab}\: &=&P^g_{ab}-\nabla_a\Upsilon_b
+\Upsilon_a\Upsilon_b-\tfrac{1}{2}\hh g_{ab}|\Upsilon|_g^2\, ,\\[1mm]\nonumber
C^{\Omega^2g}_{abc}\:&=&C^g_{abc}+\Upsilon^dW_{dcab}\, ,\\[2mm]\nonumber
\Omega^2 B^{\Omega^2g}_{ab}\!\!&=&B_{ab} + (d-4)\big[\Upsilon^d C_{d(ab)}-\Upsilon^c\Upsilon^dW_{cabd}\big]\, ,
\end{eqnarray}
reveals that the triple on the right hand side of Equation~\nn{tr-conn} transforms as a 1-form
taking values in $\cT^*M$; {\it i.e.} again by \eqref{ttrans} except
twisted now by $\ce_a$. Thus Equation~\eqref{tr-conn} gives the
splitting into slots of a conformally invariant connection~$\nabla^\cT$  acting on sections of the bundle $\cT^*M$. In fact, from Equation~(\ref{tr-conn}), one may show that 
\begin{align} 
\nabla^{\ct,g}_a X^A\, =&\, \, \: Z^A{}_a \,, \nonumber \\
\nabla^{\ct,g}_a Z^A{}_b =& -P_{ab} X^A - \bg_{ab} Y^A\,  ,\label{dXYZ} \\
\nabla^{\ct,g}_a Y^A\,  =& \,\, \, P_a{}^b Z^A{}_b\, . \nonumber
\end{align}

\medskip

There is a deep  relationship between the tractor connection and Einstein metrics: Using~\eqref{Ptrans} and the transformation of the
Levi--Civita connection, it is straightforward to verify that the
equation
\begin{equation}\label{AE}
\nabla_{(a}\nabla_{b)_\circ} \si+ P_{(ab)_\circ}\si =0
\end{equation}
on conformal densities $\si\in \Gamma(\ce[1])$, is conformally
invariant. This   overdetermined system of PDEs---for which solutions in general do
not exist---may be studied by
prolongation. Indeed    parallel
transport with respect to the connection~\eqref{tr-conn} is exactly the closed system that
arising by prolonging~\eqref{AE}. Non-trivial solutions of \eqref{AE} for $\sigma$ are non-vanishing on an open dense
set. Moreover, the corresponding metric $\si^{-2}\bg_{ab}$ is Einstein. For further details see~\cite{BEG,curry-G,Goal} and references therein.

\smallskip

The tractor bundle is also equipped with a conformally invariant bundle metric~$h_{AB} \in \Gamma (\ce_{(AB)} )$ of signature $(n+1,1)$. This {\it tractor metric}  is defined by  the mapping
\begin{equation}
[V_A]_g = \begin{pmatrix}
    \sigma \\
    \mu_a \\
    \rho
  \end{pmatrix} \mapsto \mu_a \mu^a +2  \si \rho =: h \left(V,V \right),
\end{equation}
and the polarization identity. 
It may be decomposed into a sum of projections
$$
h_{AB}=Z_A{}^a Z_B{}^b\bg_{ab}+X_AY_B+Y_AX_B\, .
$$
 The tractor connection~$\nabla^{\cT}$  preserves the tractor metric~$h$,
so it makes sense to use~$h_{AB}$ to raise and lower tractor indices. In particular the weight one tractor~$X^A=h^{AB}X_B$ is termed the {\it canonical tractor}. It  gives the canonical jet projection $\ce_A\to \ce[1]$.
We will employ a~$\cdot$ notation to denote contractions, for example~$h(U,V)=h_{AB}U^A V^B = U_A V^B = U\cdot V$ for any pair of tractors $U,V$.

The curvature~$\kappa_{ab}{}^C{}_D$ of the tractor
connection is determined by
$$
[ \nabla_{a}^{\ct,\gamma}, \nabla^{\ct,\gamma}_{b}] \, V^C = \kappa_{ab}{}^{C}{}_D V^D \, ,$$
where $V^C$ is any section of the standard tractor bundle and~$\gamma$ is any torsion-free affine connection. 
In terms of tractor projectors one has
\begin{equation} \label{trc2}
  \kappa_{ab}{}^C{}_D =W_{ab}{}^c{}_d Z^{C}{}_c Z_D{}^d
+  C_{ab}{}^c Z^{C}{}_c X_{D} 
-  C_{abd} Z_D{}^d X^{C} 
\, .
\end{equation}
The {\it tractor curvature} is a 2-form taking values in sections of the  bundle $\wedge^2\cT M$. The latter  is  termed the {\em adjoint tractor
  bundle}, as it is a vector bundle modeled on the Lie algebra of the
conformal group $SO(n+1,1)$. 

\smallskip

\begin{remark}\label{EgivesYM}
The Yang--Mills condition with respect to some $g\in c$ for the tractor connection of $(M,c)$ is given by 
\begin{equation}\label{tractorYM}
g^{ab} \nabla^{\ct,g}_a \kappa_{bcDE}=
(d-4) C_{dec} Z_D{}^d Z_E{}^e
+ 2 B_{cd} Z_{[D}{}^d X_{E]}=0\, .
\end{equation}
Here we used the splitting determined by $g$ itself.
When $d=4$, the above-displayed condition  is  equivalent 
to  Bach flatness~\cite{GSS}.
When  $d>4$, it suffices that $g$ is  Einstein to satisfy  Condition~\nn{tractorYM}~\cite{Goal}.
\end{remark}

\smallskip

There exists a rather useful, tractor-valued, extension of the connection-coupled gradient:
The \textit{Thomas-$D$ operator}, mapping  $
 \Gamma( \ct^{\Phi} M[w])$ to $\Gamma(\ct^* M \otimes \ct^{\Phi} M[w-1])$, is given in a choice of splitting by
\begin{align}\label{splitT}
 D_A\, T^{\Phi} :=(d+2w-2) \left(w Y_A  T^{\Phi} + Z_A{}^a  \nabla_a  T^{\Phi}\right) - X_A ({\sf g}^{ab} \nabla_a \nabla_b+ wJ) T^{\Phi}\, .
\end{align}
In the above we have introduced the convention that a connection $\nabla$ is coupled according to the type of section it acts upon. For example, the above splitting uses some choice of metric and $\nabla_b T^\Phi= \nabla^\cT_b T^\Phi$ is a tractor-valued 1-form, so ~$\nabla_a$ acts as the corresponding, tractor-coupled  Levi--Civita connection on this. 
The Thomas-$D$ operator is \textit{strongly invariant} meaning that the above formula also gives a map $
 \Gamma( \ct^{\Phi} M[w]\otimes VM)$ to $\Gamma(\ct^* M \otimes \ct^{\Phi} M[w-1]\otimes VM)$ when $\nabla$ is in addition coupled to any connection $A$ on a vector bundle $VM$ and $T^\Phi$ takes values in sections thereof; 
 see for example~\cite{GOpet,Gover-DN}.

\medskip

One may also construct a curvature-like tractor, dubbed the {\it $W$-tractor}, from the commutator of Thomas-$D$ operators~\cite{GOpet}.
Indeed, when~$T^\Phi \in \Gamma(\ct^{\Phi} M[w])$ and~$n\neq 4$, 
$$[D_A, D_B] \hh T^{\Phi}= (n+2w-4)(n+2w-2) W_{AB}{}^{\sharp} T^{\Phi} + 4 X_{[A} W_{B]C}{}^\sharp \circ D^C T^{\Phi}\,,$$
where, in a choice of splitting,
\begin{align*}
W_{ABCD} := Z_A{}^a Z_B{}^b Z_C{}^c Z_D{}^d W_{abcd} &+ (2X_{[A} Z_{B]}{}^a Z_D{}^c Z_C{}^b   - 2 X_{[C} Z_{D]}{}^a Z_A{}^c Z_B{}^b ) C_{cba} \\
&+ \tfrac{4}{n-4} Z_{[A}{}^a X_{B]} Z_{[C}{}^b X_{D]} B_{ab} \,.
\end{align*}

The Thomas-$D$ operator is not a derivation, however it almost obeys a Leibniz property if suitably rescaled. For that we define
a  \textit{hatted Thomas-$D$ operator} given, when $n+2w-2 \neq 0$, by
$$\hd := \tfrac{1}{n+2w-2} \,D : \Gamma(\ct^{\Phi} M[w]\otimes VM) \rightarrow \Gamma(\ct^* M \otimes \ct^{\Phi} M[w-1]\otimes VM)\,.$$
A useful identity is $$X\cdot\hat D \hh T^\Phi = w T^\Phi\, $$ for any weight $w\neq 1-\frac n2$ tractor $T^\Phi$.
We call any standard tractor $I$ given by the  hatted Thomas-$D$ operator acting on a scale~$\tau$, so that
 $$
 I^A:= \hat D^A \tau\, ,
 $$
 a {\it scale tractor}. The top slot of a scale tractor equals the corresponding scale, 
 \begin{equation}\label{scaletop}X\cdot  I = \tau\, .\end{equation}
 Equation~\nn{AE}, specialized to  the case that $\sigma$ is some scale $\tau$,
 is a necessary and sufficient condition for $c$ to contain  an Einstein metric, and moreover 
  is equivalent to the parallel condition 
  $$
 \nabla_a I^A = 0
 $$
 on the corresponding scale tractor; see~\cite{BEG}.
 The above-displayed condition is  equivalent to requiring $D_A I^B=0$.

\bigskip

The hatted Thomas-$D$ operator's failure to obey a Leibniz rule is encoded below.
\begin{lemma} \label{leibniz-failure}
Let $U_1 \in \Gamma(\ct^\Phi M[w_1]\otimes VM)$ and $U_2 \in \Gamma(\ct^{\Phi'} M[w_2]\otimes VM)$, such that $n+2w_1 -2 \neq 0\neq n+2w_2 -2 $ and $n+2w_1 + 2w_2 -2 \neq 0$. Then,
\begin{equation}\label{Iamafailure}
\hat D_A (U_1 U_2) - (\hat D_A U_1) U_2 - U_1 \hat D_A U_2 =-\tfrac{2}{n+2(w_1+w_2)-2} \, X_A\,  (\hat D_B U_1) \hat D^B U_2\,.
\end{equation}
\end{lemma}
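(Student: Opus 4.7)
The plan is to work in a fixed choice of metric $g\in c$ and expand both sides of Equation~\eqref{Iamafailure} using the splitting formula~\eqref{splitT} for $\hat D$. Writing $w:=w_1+w_2$ for the weight of the product, the formula
$$\hat D_A T = w_T Y_A T + Z_A{}^a \nabla_a T - \tfrac{1}{n+2w_T-2}\, X_A (\Delta + w_T J)\hh T$$
exhibits $\hat D_A$ as a sum of three contributions proportional to $Y_A$, $Z_A{}^a$, and $X_A$. The $Y_A$ contributions on the right-hand side of~\eqref{Iamafailure} combine correctly because $w=w_1+w_2$, and the $Z_A{}^a$ contributions combine correctly because $\nabla$ satisfies the Leibniz rule on products (here the connection $\nabla$ is tractor- and $A$-coupled as needed). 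Hence the Leibniz failure is supported entirely by the $X_A$-term.

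For the $X_A$-piece I use the standard identity $\Delta(U_1 U_2) = (\Delta U_1) U_2 + U_1 (\Delta U_2) + 2(\nabla_a U_1)(\nabla^a U_2)$ together with $wJ = w_1 J + w_2 J$ to rewrite
$$(\Delta + wJ)(U_1 U_2) = (\Delta + w_1 J)U_1\cdot U_2 + U_1 (\Delta + w_2 J) U_2 + 2(\nabla_a U_1)(\nabla^a U_2).$$
The $X_A$-contributions from $(\hat D_A U_1)U_2$ and $U_1\hat D_A U_2$ carry coefficients $-1/(n+2w_1-2)$ and $-1/(n+2w_2-2)$, differing from $-1/(n+2w-2)$. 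Combining fractions via $\tfrac{1}{n+2w_i-2}-\tfrac{1}{n+2w-2}=\tfrac{2w_j}{(n+2w_i-2)(n+2w-2)}$ with $\{i,j\}=\{1,2\}$, the Leibniz failure becomes
$$\tfrac{2 X_A}{n+2w-2}\!\left[\tfrac{w_2\,(\Delta + w_1 J)U_1\cdot U_2}{n+2w_1-2}+\tfrac{w_1\, U_1 (\Delta + w_2 J)U_2}{n+2w_2-2}-(\nabla_a U_1)(\nabla^a U_2)\right].$$

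The last step is to recognise the bracket as $-\hat D_B U_1\, \hat D^B U_2$. Expanding each factor via~\eqref{splitT} produces nine cross-terms; contracting tractor indices with the projector identities $X^A X_A = Y^A Y_A = 0$, $X^A Y_A = 1$, $Z^{Aa} Z_A{}^b = \bg^{ab}$, and $X^A Z_A{}^a = Y^A Z_A{}^a = 0$, six of these vanish and the surviving three are exactly the negatives of the entries of the bracket. Substituting reproduces the right-hand side of~\eqref{Iamafailure}.

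The argument is pure algebraic bookkeeping within a single splitting. The only subtlety is tracking the three distinct weight-dependent denominators $n+2w_1-2$, $n+2w_2-2$, $n+2w-2$, whose non-vanishing is precisely the standing hypothesis and ensures that each hatted Thomas-$D$ operator in the identity is well defined. No appeal to conformal invariance or to the Leibniz failure of the plain Thomas-$D$ operator is required for the verification.
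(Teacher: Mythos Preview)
Your proof is correct and follows exactly the approach the paper indicates: the Leibniz failure lives entirely in the $X_A$-slot because the connection-coupled Laplacian is the only piece of~\eqref{splitT} that fails Leibniz, and its product rule is unchanged upon twisting by the $VM$-connection. The paper defers the computation to references~\cite{Taronna,Forms} and only records this observation, whereas you have carried out the bookkeeping in full; your verification of the fraction identities and of $-\hat D_B U_1\,\hat D^B U_2$ via the projector inner products is accurate.
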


\begin{proof}
This identity was first given in~\cite{Taronna}. A detailed proof, in the case that~$VM$ is the trivial bundle,  may be found in~\cite{Forms}. To see that it still holds for 
general bundles~$VM$, we note that the right hand side of the above display appears precisely because  the connection-coupled Laplacian term appearing in the coefficient of the canonical tractor $X$ in~\nn{splitT} does not obey a Leibnitz identity. It is not difficult to establish that twisting the tractor connection with a bundle connection~$A$ leaves unchanged the form of the product rule for the Laplacian~${\sf g}^{ab} \nabla_a \nabla_b$.
\end{proof}

When acting on totally skew tractors---``tractor forms''---it is propitious to further modify the Thomas-$D$ operator in a manner akin to the Laplace--Beltrami improvement of the rough Laplacian  acting on differential forms.
The \textit{slashed} Thomas-$D$ operator is defined by
$$\Dslash_A := D_A - X_A\circ  W^{\sharp \sharp}\, .$$
When $n+2w-2 \neq 0$, we also define
$\hat{\Dslash} := \tfrac{1}{n+2w-2} \, \Dslash$.
Both $ \Dslash$ and $\hat{\Dslash}$  extend trivially to act on  tractor forms taking values in some other bundle.

\bigskip

There exists a quartet of conformally invariant maps $q_{\sf n}$, $q_{\sf e}$,
$q_{\sf w}$ and $q_{\sf s}$ that 
promote differential forms to tractor forms~\cite{BGforms,BGdeRham,Forms,Silhan}. We shall only need the latter two of these, adapted to act on forms taking values in sections of a vector bundle. In particular
$$
q_{\sf s} : \Gamma(\wedge^\ell T^* M[w]\otimes VM) \rightarrow \Gamma(\wedge^{\ell+1} \ct^* M[w-\ell+1]\otimes VM)\, ,
$$
where 
\begin{equation}\label{antipodal}
q_{\sf s}(t) = X_{[A_1} Z_{A_2}{}^{a_1} \cdots Z_{A_{\ell+1}]}{}^{a_\ell} t_{a_1\cdots a_\ell}\, .
\end{equation}
Note that by taking $VM=T^*M$, the operator $q_{\sf s}$ may be applied to sections of~$T^*M[w]\otimes T^*M$ as has been done in Equation~\nn{propto}; a similar maneuver can even be used to define $q_{\sf s}$ on more general covariant tensors. Also observe that the map ~$q_{\sf s}$ defined by Equation~\nn{antipodal} is 
a bundle map, {\it i.e.} it is algebraic in the sense that it does not involve derivatives of the tensor $t$.

\bigskip

When $w\neq 2\ell-n$, we may also define
$$q_{\sf w} : \Gamma(\wedge^\ell T^* M[w]\otimes VM) \rightarrow \Gamma(\wedge^\ell \ct^* M[w-\ell]\otimes VM)\, ,$$  
where
\begin{align}\label{gowestyoungman}
q_{\sf w}(t):=Z_{[A_1}{}^{a_1} \cdots Z_{A_\ell]}{}^{a_\ell} t_{a_1 \cdots a_\ell} - \tfrac{\ell}{n+w-2\ell} \hh X_{[A_1} Z_{A_2}{}^{a_2} \cdots Z_{A_\ell]}{}^{a_\ell} \nabla^{a_1} t_{a_1 \cdots a_\ell}\, .
\end{align}
So long as $w\neq -\ell$, the map $q_{\sf w}$ has a conformally invariant formal adjoint 
$$q^*_{\sf w} : \Gamma(\wedge^\ell \ct^* M[w]\otimes V^*M) \rightarrow \Gamma(\wedge^\ell T^* M[w+\ell]\otimes V^*M)\, ,$$   
where
\begin{align*}
q^*_{\sf w}(T) :=Z_{a_1}{}^{A_1} \cdots Z_{a_\ell}{}^{A_\ell} T_{A_1 \cdots A_\ell} - \tfrac{\ell}{w+\ell} \nabla_{[a_1} \big(X^{A_1} Z_{a_2}{}^{A_2} \cdots Z_{a_\ell]}{}^{A_\ell} T_{A_1 \cdots A_\ell}\big)\,.
\end{align*}
For later use, note that one can  replace $V^*M$  by $VM$ in the above adjoint map and still obtain an invariant operator.
Also, when $\iota_X T=0$, one has $q^*_{\sf w}(T) :=Z_{a_1}{}^{A_1} \cdots Z_{a_\ell}{}^{A_\ell} T_{A_1 \cdots A_\ell}$ and this map is invariant, even when $w=-\ell$. 
Note that
\begin{equation}\label{iota}
\iota_X \circ q_{\sf w} = 0 = \iota_{\cancel D} \circ q_{\sf w}\, ;
\end{equation}
the above conditions can in  fact be used to determine the map $q_{\sf w}$.
Moreover, if $U$ is any standard tractor,
and $\mu$ a 1-form, then
\begin{equation}\label{lastgasp}
q^*_{\sf w}\big(\iota_U q_{\sf s}(\mu)\big)= \mu\hh\hh   X\cdot  U \, .
\end{equation}

 When $F$ is the curvature of a connection $A$ on  $VM$ and $n\neq 4$, we term  
$$
q_{\sf w}(F)=:{\mathcal F}\in \Gamma(\wedge^2 \ct^*M[-2]\otimes \operatorname{End}\!VM)
$$
the corresponding {\it curvature tractor}~${\mathcal F}$.  In terms of projectors
\begin{equation}\label{nameme}\mathcal{F}_{AB} = Z_A{}^a Z_B{}^b F_{ab} + \tfrac{2}{n-4} Z_{[A}{}^a X_{B]} \nabla^c F_{ca}\,.\end{equation}
The curvature tractor obeys a Bianchi-like identity.

\begin{lemma}\label{tractor-bianchi}
Let $F$ be the curvature of a  connection $A$ on $VM$ and suppose $3\leq n=\operatorname{dim} M\neq 4,6$. 
Then the curvature tractor satisfies
\begin{equation}
\label{Bianchiid}
\hat{\Dslash}_{[A} \mathcal{F}_{BC]} = 0\,.
\end{equation}
\end{lemma}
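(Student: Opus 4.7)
The approach is a direct verification in a choice of scale $g\in c$. One expands the curvature tractor via~$\nn{nameme}$, writes $\Dslash_A = D_A - X_A\circ W^{\sharp\sharp}$ with $D_A$ given by~$\nn{splitT}$ at weight $w = -2$, and computes using the tractor- and $A$-coupled Levi--Civita connection, whose action on the projectors $X$, $Y$, $Z$ is recorded in~$\nn{dXYZ}$. Since the prefactor $n + 2w - 2 = n-6$ appears in $D_A$, the normalization $\hat\Dslash = \tfrac{1}{n-6}\,\Dslash$ is finite exactly when $n \neq 6$; together with the $\tfrac{2}{n-4}$ factor in~$\nn{nameme}$, this explains the hypothesis $n \neq 4, 6$.

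Upon totally antisymmetrizing $\Dslash_{[A}\mathcal F_{BC]}$, the surviving tractor-projector combinations are $ZZZ$, $YZZ$, $YXZ$, $ZXZ$ and $XZZ$ (any combination involving two $X$'s is killed by the antisymmetry). The plan is to check each slot in turn. The $ZZZ$ slot collapses to $\nabla_{[a} F_{bc]}$ and so vanishes by the differential Bianchi identity for $F$. The $YZZ$, $YXZ$ and $ZXZ$ slots are essentially algebraic: they collect contributions from the $-2 Y_A \mathcal F_{BC}$ piece of $D_A$, from the derivatives of projectors $\nabla_a Z^A{}_b = -P_{ab} X^A - \bg_{ab} Y^A$ and $\nabla_a X^A = Z^A{}_a$, and from the $\tfrac{2}{n-4}\,\nabla^d F_{db}$ piece of $\mathcal F$; trace terms and Schouten terms match up and cancel directly.

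The crux lies in the $XZZ$ slot. There the $X_A(\Delta - 2J)\mathcal F_{BC}$ contribution from $D_A$ must be paired with the $Z_A{}^a \nabla_a$ contribution acting on the $XZ$-part of $\mathcal F$, which brings in a factor $\nabla_a \nabla^d F_{d b}$. Commuting that pair of tractor- and $A$-coupled covariant derivatives produces, via the Ricci decomposition, a Weyl piece, Schouten pieces, and a bundle-curvature piece. The bundle-curvature piece vanishes by the skew symmetry of the relevant contraction; the Schouten pieces are absorbed by the explicit $-2J$ summand in $D_A$ and by the $P$-valued derivatives of projectors, with the matching completed using the contracted Bianchi identity~$\nn{divPo}$. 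What remains is precisely the Weyl contribution, and it cancels against $-X_A\circ W^{\sharp\sharp}\mathcal F_{BC}$, which is the very reason for this correction in the slashed Thomas-$D$ operator. Organizing this cancellation---in particular matching the Weyl-valued remainder to $W^{\sharp\sharp}$ acting on the two-index object $\mathcal F$ and not on a one-index object---will be the main technical step of the proof.
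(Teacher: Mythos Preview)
Your proposal is correct and follows essentially the same route as the paper's own proof: a direct slot-by-slot computation in a choice of scale, with the $ZZZ$ slot vanishing by the differential Bianchi identity $\nabla_{[a}F_{bc]}=0$ and the $XZZ$ slot by a Weitzenb\"ock-type identity for $\Delta F_{bc}$, the Weyl remainder being absorbed by the $W^{\sharp\sharp}$ correction in $\Dslash$. The only organizational difference is that the paper first assembles $\hat D_{[A}\mathcal F_{BC]}$ in full and observes that only the $ZZZ$ and $XZZ$ slots survive; it then handles the $XZZ$ slot by using Bianchi to rewrite $\Delta F_{bc}=-\nabla^a(\nabla_c F_{ab}+\nabla_b F_{ca})$ and \emph{then} commuting $\nabla^a$ past $\nabla_b,\nabla_c$, rather than commuting inside $\nabla_a\nabla^d F_{db}$ as you describe. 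That ordering makes the appearance of $2\nabla_{[b}j_{c]}$ and of the Schouten/Weyl pieces more transparent.

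One point deserves care: your assertion that the bundle-curvature piece from the commutator ``vanishes by the skew symmetry of the relevant contraction'' is not obvious for nonabelian $A$. The commutator $[\nabla^a,\nabla_c]F_{ab}$ contributes a term $[F^a{}_c,F_{ab}]$, and the combination $[F^a{}_c,F_{ab}]+[F^a{}_b,F_{ca}]$ is antisymmetric in $b,c$ but not obviously zero. The paper invokes the full coupled identity $[\nabla_a,\nabla_b]F_{cd}=R_{ab}{}^\sharp F_{cd}+[F_{ab},F_{cd}]$ at exactly this step; you should track the $[F,F]$ contribution explicitly through the computation rather than asserting it away.
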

\begin{proof}
According to the definition of $\hat{\Dslash}$, we have that
$$
\hat{\Dslash}_{[A} \mathcal{F}_{BC]} = \hd_{[A} \mathcal{F}_{BC]} + \tfrac{2}{n-6} X_{[A} W_{B}{}^{DE}{}_{C]} \mathcal{F}_{DE}\,.$$
On sections of $\wedge^2 \ct^* M[-2] \otimes \operatorname{End}\!VM$, the hatted Thomas-$D$ operator $\hat D$  acts, in the splitting given by some a choice of metric representative $g \in c$, according to
$\hd_A = -2 Y_A + Z_A{}^a \nabla_a - \tfrac{1}{n-6} X_A (\Delta - 2 J)$.
Thus, using Equation~\nn{dXYZ} together with the formula for the curvature  tractor  in Equation~\nn{nameme},
we find
\begin{multline*} 
\hat{\Dslash}_{[A} \mathcal{F}_{BC]} = Z_{[A}{}^a Z_B{}^b Z_{C]}{}^c \nabla_{[a} F_{bc]} \\
 + \tfrac{1}{n-6} X_{[A} Z_{B}{}^b Z_{C]}{}^c \big(-\Delta F_{bc} + 2 \nabla_{b} j_{c} - 2(n-4) P_{b}{}^a F_{ca} + 2 J F_{bc} + 2 W_{b}{}^{de}{}_{c} F_{de} \big)\,.
\end{multline*}
The first term on the right hand side vanishes by dint of the  Bianchi identity $\nabla_{[a} F_{bc]} = 0$
which also implies that
$\Delta F_{bc} = -\nabla^a (\nabla_c F_{ab} + \nabla_b F_{ca})$.
Applying the identity 
$[\nabla_a, \nabla_b] F_{cd} = R_{ab}{}^{\sharp} F_{cd} + [F_{ab},F_{cd}]$, and recalling that $j_b := \nabla^a F_{ab}$, 
%
we thus have that
\begin{align*} 
\Delta F_{bc} = 2 \nabla_{[b} j_{c]} - 2(n-4) P_{[b}{}^a F_{c]a} + 2 J F_{bc} + 2 W_{b}{}^{de}{}_{c} F_{de}\,.
\end{align*}
\end{proof}

\section{Yang--Mills Holography}\label{Gnocchi}

Let $X^{d}$ be a compact manifold with boundary $\partial X = M^n$, where $d=n+1$.
A weight $w=1$ density $\bm \sigma \in \ce X[1]$
is called {\it defining} if any representative $\bm s\in  \bm \sigma$ (determined by some $g\in \cc$) is a defining function for the embedding  $M\hookrightarrow X$, meaning $\ext \bm s $ is nowhere vanishing along $M$ and the zero locus of $\bm s$ is $M$ itself. Indeed the unit conormal~$\hat n^g$ 
 to $M$, with respect to $g$,  is given by $(\ext \bm s)/|\ext {\bm s}|_g$. This gives a canonical decomposition 
 $$TX|_M=TM\oplus NM\, ,$$ 
 where $NM$ is the normal bundle. 
 This decomposition is independent of the particular choice of $g\in \cc$ as can be seen from the conformal transformation property of the unit conormal  $\hat n^{\Omega^2 g} =  \Omega\hh \hat n^g$. Indeed, this determines a {\it conformal unit conormal} $\hat n_a\in \Gamma(N^*M[1])$ as well as a {\it conformal
unit normal} vector $\hat n^a = \bm g^{ab} \hat n_b\in \Gamma(NM[-1])$. In the above display we have  used the isomorphism between sections of $TX|_M$ perpendicular to~$\hat n$ with those of~$TM$ to equate these spaces and will even use the same abstract indices for sections of those bundles. For clarity, we will denote sections of bundles over $X$ by a bold notation
  to avoid confusion with those on~$M$. Also, $\bm g$  denotes the conformal metric of $\cc$. 
 
\smallskip

A complete metric $g_+$ 
on the interior $X_+$ is said to be conformally compact if~$\bm \sigma^2 g_+$
smoothly extends to the conformal metric $\bm g$ for a conformal class of  metrics~$\bm c$ on~$X$ whenever $\bm \sigma$ is a defining density.
A conformally compact metric $g_+$   induces a  conformal class of metrics~$c$ on~$M$. 
Slightly abusing notation, we also term a standard tractor $\bm I$ given by
$$\bm I:=\hat{  D} \bm \sigma \in \ct X$$ 
 a {\it scale tractor}. The point here is that $\bm I$ is a scale tractor on the interior $X_+$, but it also extends smoothly to $X$. 
 The parallel condition
 \begin{equation}\label{parallel}
 \nabla_a \bm I^A =0\, ,
\end{equation}
on  the scale tractor of a defining density $\bm \sigma$ 
is then equivalent to the Poincar\'e--Einstein for $g_+=\bm \sigma^{-2}\bm g$~\cite{Goal}. By a constant rescaling of~$g_+$  one may further achieve 
 $$
 \bm I^2 = 1\, .
 $$
 Thus $\bm{ I}|_M$ gives a tractor analog 
 of the unit normal vector and a canonical decomposition $$\ct X|_M = \ct M \oplus {\mathcal N}M\, .$$ 
Indeed,  in a choice of splitting where $\bm I^A  = \bm \sigma \bm Y^A + \bm Z^A{}_a \bm n^a + \bm\rho \bm X^A$, the density-valued vector $\bm n^a=\nabla^a \bm \sigma$ is an extension of the conformal unit normal~$\hat n$ to $X$. 
Also, we will denote by~$\top$ the restriction of tractors on $X$  to $\ct X|_M$ composed with the projector   onto the first summand of the above direct sum.
 Once again we have equated bundles and sections,  and recycled the corresponding abstract indices.  
 Further details are given in
 ~\cite{AGo}.
 Also observe that $\bm I\cdot \bm X 
|_M=0$ so that, for any extension~$\bm T^{\rm ext}$ of a covariant tensor~$T$ along $M$, we have
$\iota_{\bm I} q_{\sf s}(\bm T^{\rm ext})|_M=0$ 
and in turn
 \begin{equation}\label{southissouth}
 q_{\sf s}(\bm T^{\rm ext})|_M =  q_{\sf s}(T)\, .
 \end{equation}  
   Note that when extending covariant tensors on $M$ to $X$ we  first use the orthogonal decomposition $T^*X|_M\cong T^*M \oplus NM$ to trivially extend from  $T^*M$ to $T^*X|_M$.

 \smallskip

On any conformally compact manifold $(X,\cc,\bm \sigma)$ for which the scalar curvature of the conformally compact metric $g_+$ is everywhere strictly negative, 
there is a very useful triple of operators $\{x,h,y\}$ defined acting on tractors (of any weight and tensor type) that obey the
Lie algebra $\mathfrak{sl}(2)$~\cite{GW},
\begin{equation}\label{sl2}
[h,x]=2x\, ,\quad [x,y]=h\, ,\quad [y,h]=2y\, .
\end{equation}
The operator $x$ is simply multiplication by the defining density $\bm \sigma$, while $h$ multiplies tractors of definite weight $w$ by $d+2w$. The operator $y$ is given by
$$
y:=-\frac1{\bm I^2} \bm I\cdot \cancel{D}\, .
$$
Note that the operator $\bm I\cdot \cancel{D}$
is an example of a so-called  {\it Laplace--Robin}
operator  because it may be viewed as a degenerate Laplace--type operator, see~\cite{Gover-DN,GW} and references therein for further details.
Strong invariance of the Thomas-$D$ operator implies that the above $\mathfrak{sl}(2)$ algebra also holds when defined  acting on tractors taking values in sections of a vector bundle $VX$,  given a connection~$\bm A$.

For later use, let us record the following  $\mathfrak{sl}(2)$ 
enveloping algebra relations,
\begin{align}
[x,y^\ell]\, &=y^{\ell-1} \ell (h-\ell+1)\, ,\label{stripoff}\\
[x^\ell,y^\ell] &=(-1)^{\ell+1} \ell!\,  h(h+1)(h+2)\cdots (h+\ell-1) + xF
\, ,\label{factorial}
\end{align}
valid for any strictly  positive integer $\ell$ and where $F$ is some element of the enveloping algebra ${\mathcal U}(\mathfrak{sl}(2))$.
Also, note the identity 
\begin{equation}\label{sigpow}
\hat { D}_A \bm \sigma^\ell = \ell \bm I_A \bm \sigma^{\ell-1} 
-\frac{\ell(\ell-1)}{d+2\ell-2}
 \bm I^2 \bm X_A \bm \sigma^{\ell-2}\, .
\end{equation}
This result was first derived in~\cite{Will1} but also follows from Lemma~\ref{leibniz-failure}.

Let us now suppose that the connection $\bm A$ is  Yang--Mills on  $X_+$ with respect to some conformally compact metric $g_+$ and  
$\bm{\mathcal F}\in \Gamma(\wedge^2 \mathcal{T}X[-2] \otimes \operatorname{End}\!VX)$ is the corresponding curvature tractor.
Then, when $d\neq 4$, a short computation shows that 
$
(d-4)q_{\sf w}^*( \bm I^A \bm {\mathcal F}_{AB})=\bm \sigma \nabla^a 
{\bm  F}_{ab}
 -(d-4) \bm n^a
 \bm F_{ab}$.
As shown in~\cite{GLWZ1}, the right hand side of this equation equals the smooth extension of $\bm \sigma^{-1}$ times the Yang--Mills current $ j[g_+,\bm A]$.  
Thus when $d\neq 4$, 
 the equation
\begin{equation}\label{IF}
\bm I^A{\bm {\mathcal F}}_{AB}=0\, ,
\end{equation}
implies
 the Yang--Mills condition on $\bm A$ over $X_+$ with respect to $g_+$. In fact, on $X^d$ for $d>4$, 
  Equation~\nn{IF} 
is equivalent to imposing  $$\bm \sigma  \nabla^a 
{\bm  F}_{ab}
 -(d-4) \bm n^a
 \bm F_{ab}=0\, .$$

\begin{lemma}
Let $\bm A$ be a Yang--Mills connection on a conformally compact manifold $(X,g_+)$ with corresponding scale tractors and curvature tractors $\bm I$ and $\bm{\mathcal F}$. Then
\begin{equation}\label{harmonic}
 \bm I\cdot \cancel{ D}\, \bm{\mathcal F}=0\, .
\end{equation}
\end{lemma}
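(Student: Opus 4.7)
The plan is to combine three ingredients already assembled in the excerpt: (i) the Poincar\'e--Einstein condition on $g_+$ gives $\nabla_a \bm I^A = 0$, so by strong invariance of the Thomas-$D$ operator $D_A \bm I^B = 0$; (ii) the Yang--Mills hypothesis together with $d > 4$ provides the algebraic identity $\bm I^A \bm{\mathcal F}_{AB} = 0$ on all of $X$, as discussed immediately before the statement; and (iii) the tractor Bianchi identity $\hat{\cancel D}_{[A} \bm{\mathcal F}_{BC]} = 0$ from Lemma~\ref{tractor-bianchi}, applicable since $d = n+1$ is odd and hence automatically avoids the excluded dimensions $4$ and $6$.

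First I would unskew the Bianchi identity to obtain the cyclic relation
$$
\cancel{D}_A \bm{\mathcal F}_{BC} + \cancel{D}_B \bm{\mathcal F}_{CA} + \cancel{D}_C \bm{\mathcal F}_{AB} = 0\, ,
$$
contract with $\bm I^A$, and exploit the antisymmetry of $\bm{\mathcal F}$ to deduce
$$
\bm I \cdot \cancel{D}\, \bm{\mathcal F}_{BC} = 2\, \bm I^A \cancel{D}_{[B}\, \bm{\mathcal F}_{C]A}\, .
$$
It then suffices to show that $\bm I^A \cancel{D}_C \bm{\mathcal F}_{AB} = 0$, which I would derive by applying $\cancel{D}_C$ to the Yang--Mills identity $\bm I^A \bm{\mathcal F}_{AB} = 0$ and verifying that every correction to a naive Leibniz expansion vanishes.

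The main obstacle is that $\cancel{D}$ is not a derivation: its $D$-part obeys only the modified Leibniz rule of Lemma~\ref{leibniz-failure}, and the $-X \circ W^{\sharp\sharp}$ piece contributes its own correction. Both kinds of correction, however, are controlled by the parallelism of $\bm I$: the anomalous term $X_A (\hat D_B \bm I)\, \hat D^B \bm{\mathcal F}$ from Lemma~\ref{leibniz-failure} vanishes because $\hat D \bm I = 0$, while the $W^{\sharp\sharp}\bm I$ contribution can be handled using the explicit form of $W_{ABCD}$ written out in a Poincar\'e--Einstein splitting. Once these verifications are in place, $\bm I^A \cancel{D}_C \bm{\mathcal F}_{AB} = 0$ follows and substituting back into the displayed identity closes the argument.
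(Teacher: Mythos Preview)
Your proposal is correct and follows essentially the same route as the paper: contract the tractor Bianchi identity with $\bm I$, then commute $\bm I$ past $\cancel{D}$ using the parallel condition together with the modified Leibniz rule, and finally invoke $\bm I^A\bm{\mathcal F}_{AB}=0$. Two small remarks: the sign in your displayed identity should read $\bm I\cdot\cancel{D}\,\bm{\mathcal F}_{BC}=-2\,\bm I^A\cancel{D}_{[B}\bm{\mathcal F}_{C]A}$, and your explicit check of the $W^{\sharp\sharp}$ correction (which vanishes because in the $g_+$ splitting the $W$-tractor is purely $Z^4 W_{abcd}$ while $\bm I$ has no $Z$-component) is a detail the paper leaves implicit.
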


\begin{proof} This is a direct consequence of the Bianchi identity of Lemma~\ref{tractor-bianchi} since
$$
\bm I^C  \cancel{ D}_C\bm{\mathcal F}_{AB}
= -2 \bm I^C  \cancel{ D}_{[A}\bm{\mathcal F}_{B]C}
= -2   \cancel{ D}_{[A}(\bm I^C\bm{\mathcal F}_{B]C})=0\, .
$$
The  second to last equality used
 the modified Leibniz rule~\nn{Iamafailure}
 as well as that
 the
parallel condition~\nn{parallel} implies that  $\cancel{ D}_A,\bm I^C=0$.
\end{proof}

We are here not interested in constructing exact solutions to the Poincar\'e--Einstein or Yang--Mills conditions on $X_+$. Rather, we will only need the case where $X = [0,\varepsilon)\times M$ is some collar neighborhood of $M$ where $0<\varepsilon\in {\mathbb R}$. In that case, it is well known~\cite{FG,GW} that the data of conformal class of metrics $c$ on~$M$ determines an asymptotic solution for $g_+$  to the Poincar\'e--Einstein condition to the order given by
 \begin{equation}\label{parallelasym}  
\nabla_a \bm I^A ={\mathcal O}(\bm \sigma^{d-3})
\Rightarrow D_A \bm I^B = {\mathcal O}(\bm \sigma^{d-4})
\, .
\end{equation}
We shall call the data of a collar neighborhood $X=M\times [0,\varepsilon)$
equipped with a conformal class of metrics $\cc$ and defining density $\bm \sigma$ for $M$ that obeys Equation~\nn{parallelasym} and $\bm I^2|_M=1$ an {\it asymptotically Poincar\'e--Einstein} structure $(X,\cc,\bm\sigma)$. 
Note that the Einstein metric on the interior $X_+$ is then given in terms of the conformal metric $\bm g$ and the defining density $\bm \sigma$ by
$$
g_+=\frac{\bm g}{\bm \sigma^2}\, .
$$ 
%
%
%
Moroever, as is evident from the prolongation discussion below Equation~\nn{AE}, Equation~\nn{parallelasym}  
 is equivalent to the statement
 \begin{equation}\label{FGobst}
 P^{g_+}_{(ab)_\circ}= \bm \sigma^{d-3} \bm {\mathcal B}^{\rm ext}_{ab}
\end{equation}
 for some tensor $\bm {\mathcal B}^{\rm ext}\in \Gamma(\odot^2 T^*X_+[3-d])$. In fact, the tensor $\bm  {\mathcal B}^{\rm ext}$ is an  extension to~$X_+$ of the  {\it Fefferman--Graham obstruction  tensor} $  {\mathcal B}\in \Gamma(\odot^2 T^*M[3-d])$. 
 The latter is  trace-free and non-vanishing 
 for all even dimensions $n=d-1\geq 4$.  
 The {\it Fefferman--Graham flatness} condition~$ {\mathcal B}=0$ is a conformally invariant, higher derivative, weakening of the Einstein condition~\cite{FG}.

\smallskip
We can also compute the asymptotic behaviors of the Cotton and Bach tensors.

\begin{lemma}\label{CBasym}
Let $(X^d,\cc,\bm \sigma)$ be an asymptotically Einstein structure where $d\geq 5$. 
Then 
\begin{eqnarray}\label{Casym}
C^{g_+}_{abc}
&=&\!2 (d-2)\hh \bm \sigma^{d-4} \bm n_{[a} {\bm {\mathcal B}}^{\rm ext}_{b]c}+{\mathcal O}(\bm \sigma^{d-3})\, ,
\\
\label{Cnasym}
C^{g_+}_{\bm nbc}
&=&\: \hh (d-2)\hh \bm \sigma^{d-4}\hh {\bm {\mathcal B}}^{\rm ext}_{bc}+{\mathcal O}(\bm \sigma^{d-3})\, ,
\end{eqnarray}
and
\begin{equation}\label{Bgp}
B^{g_+}_{ab}=(d-2)\bm \sigma^{d-3}{\bm {\mathcal B}}^{\rm ext}_{ab} + {\mathcal O}(\bm \sigma^{d-2})\, ,
\end{equation}
where $\bm n=\nabla^{ g} \bm \sigma$ for some $ g\in \cc$.
\end{lemma}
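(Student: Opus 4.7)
The plan is to fix a metric scale $g \in \cc$ in which the defining density trivializes to a function $\bm s$ with $g_+ = \bm s^{-2}g$. By \nn{FGobst} combined with $J^{g_+} = -d/2$ (constant to the required order), the Schouten tensor admits the expansion
$$P^{g_+}_{ab} = -\tfrac{1}{2}g_+{}_{ab} + \bm s^{d-3}\bm{\mathcal B}^g_{ab} + \mathcal O(\bm s^{d-2}),$$
where $\bm{\mathcal B}^g$ is the scale-$g$ representative of $\bm{\mathcal B}^{\rm ext}$. The principal computational tool is the standard Levi--Civita transformation law: $\nabla^{g_+}$ differs from $\nabla^g$ by Christoffel terms built from $\Upsilon_a = -\bm n_a/\bm s$, which are singular as $\bm s \to 0$ and combine non-trivially with the explicit $\bm s$-powers in $P^{g_+}_{(ab)_\circ}$.

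For the Cotton tensor, I would expand $C^{g_+}_{abc} = 2\nabla^{g_+}_{[a} P^{g_+}_{b]c}$ via this transformation. The $-\tfrac12 g_+$ piece is annihilated by $\nabla^{g_+}$; on the obstruction piece, the singular Christoffel corrections generate contributions at order $\bm s^{d-4}$ that combine with $\partial_a \bm s^{d-3} = (d-3)\bm s^{d-4}\bm n_a$. After antisymmetrization over $[ab]$, the coefficient works out to $2(d-2)$, together with residual trace-like terms involving $\bm n^c \bm{\mathcal B}^g_{ca}$. These trace terms vanish at the boundary as a consequence of the Bianchi identity \nn{divPo} together with $\nabla^a P^{g_+}_{ab} = \nabla_b J^{g_+} = 0$: one deduces $\nabla^{g_+, c} P^{g_+}_{(cb)_\circ} = 0$, and substituting $P^{g_+}_{(cb)_\circ} = \bm s^{d-3}\bm{\mathcal B}^g_{cb}$ forces $\bm n^a \bm{\mathcal B}^g_{ab}|_M = 0$ at leading order. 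This yields \nn{Casym}; contracting with $\bm n^a$ and using $|\bm n|^2|_M = \bm I^2|_M = 1$ produces \nn{Cnasym}.

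For the Bach tensor \nn{bachy}, the Weyl contribution $P^{g_+, cd}W^{g_+}_{cadb}$ is suppressed: the dominant $-\tfrac12 g_+^{cd}$ piece of $P^{g_+, cd}$ is killed by the trace-freeness of $W^{g_+}$, and the remaining obstruction piece is $\mathcal O(\bm s^{d-1})$ after accounting for the conformal weight behavior $W^{g_+}_{cadb} = \bm s^{-2} W^g_{cadb}$. Thus $B^{g_+}_{ab} = \nabla^{g_+, c}C^{g_+}_{cab} + \mathcal O(\bm s^{d-1})$; applying $g_+^{cd}\nabla^{g_+}_d$ to \nn{Casym} and carefully tracking the singular Christoffel corrections---again invoking the boundary transversality of $\bm{\mathcal B}^g$ and $|\bm n|^2|_M = 1$---yields \nn{Bgp}. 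The main obstacle is the delicate book-keeping of the $1/\bm s$ singular Christoffel corrections acting on tensors whose magnitudes already scale as powers of $\bm s$: one must ensure that only the correct terms are promoted from nominally subleading to leading order upon differentiation, relying on both the trace-freeness of $\bm{\mathcal B}^g$ and the boundary transversality to obtain the final clean formulas with coefficient $(d-2)$.
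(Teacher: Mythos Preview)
Your approach is correct and mirrors the paper's proof: both compute $\nabla^{g_+}_c P^{g_+}_{(ab)_\circ}$ via the conformal Christoffel transformation, antisymmetrize for \nn{Casym}, contract with $\bm n$ for \nn{Cnasym}, and then feed these into the Bach divergence for \nn{Bgp} (the paper records the intermediate identity $B^{g_+}_{bc} = g_+^{ad}\nabla^g_d C^{g_+}_{a(bc)} - (d-5)\bm\sigma C^{g_+}_{\bm n(bc)} + {\mathcal O}(\bm\sigma^{d-2})$, which is exactly your divergence with the singular Christoffel corrections made explicit). One small caution: your Bianchi argument for the transversality $\bm n^a\bm{\mathcal B}^{\rm ext}_{ab}|_M = 0$ does not actually close at the stated order---direct expansion gives $\nabla^{g_+,c}(\bm\sigma^{d-3}\bm{\mathcal B}^{\rm ext}_{cb}) = -\bm\sigma^{d-2}\bm n^c\bm{\mathcal B}^{\rm ext}_{cb} + {\mathcal O}(\bm\sigma^{d-1})$, while the constraint $\tfrac{d-1}{d}\nabla_b J^{g_+}$ is itself only ${\mathcal O}(\bm\sigma^{d-2})$, so no information is extracted---and the paper accordingly just cites this transversality as a known property of the Fefferman--Graham obstruction tensor rather than deriving it.
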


\begin{proof}
We rely on Equation~\nn{FGobst}. Upon noting for   any 1-form field~$\nu$ on~$X_+$ that $$\nabla^{g_+}_a  \nu_b =\nabla^{ g}_a \nu_b 
+\bm \sigma^{-1} \big(\bm n_a \nu_b  + \bm n_b \nu_a -{\bm g}_{ab} \bm n^c \nu_c)\, ,$$
it in turn follows from Equation~\nn{FGobst} that
$$
\nabla_c^{g_+} P^{g_+}_{(ab)_\circ}= \bm \sigma^{d-4} \big[
(d-1) \bm n_c {\mathcal B}^{\rm ext}_{ab}
+
2 \bm n_{(a} {\mathcal B}^{\rm ext}_{b)c}
\big]
+{\mathcal O}(\bm \sigma^{d-3})\, .
$$
Employing
Equations~\nn{divPo} and~\nn{Cotton}  it is then not difficult to
establish Equation~\nn{Casym}. 
Equation~\nn{Cnasym} follows directly from Equation~\nn{Casym}
upon recalling that $\bm n$ extends the unit conormal to $X$ 
and that $\bm n^a    \bm {\mathcal B}^{\rm ext}_{ab}\big|_M=0$.

Along similar lines, Equations~\nn{bachy} and~\nn{FGobst} together give
$$
B_{bc}^{g_+}=g_+^{ad} \nabla_d^{ g} C_{a(bc)}^{g_+}
-(d-5) \bm \sigma C^{g_+}_{\bm n(bc)}+{\mathcal O}(\bm \sigma^{d-2})\, .
$$
Applying  Equation~\nn{Casym} and~\nn{Cnasym} to this completes the proof.
\end{proof}

In mimicry of the above discussion of asymptotically Poincar\'e--Einstein structures, the additional data of a connection $A$ over $M$ determines an asymptotic solution~$\bm A$  to the Yang--Mills condition on $X$ with respect to $g_+$, subject to 
\begin{equation}\label{aaYM}
j[g_+,\bm A]= 
\bm \sigma^{d-3} \bm k=
{\mathcal O}(\bm \sigma^{d-3})
\, ;
\end{equation}
see~\cite{GLWZ1}. Connections $\bm A$ subject to Equation~\nn{aaYM} for some $$\bm k\in \Gamma(T^*X[3-d]\otimes \operatorname{End}\!VX)$$
 are called {\it asymptotically Yang--Mills}. Observe that an asymptotically Yang--Mills connection $\bm A$ determines a corresponding {\it obstruction current} $\bm k$; we shall use this below. In fact, when restricted to the boundary, the obstruction current  gives the conformal Yang--Mills current and so, in turn,  the
  conformal Yang--Mills conditions.
 \begin{theorem}[from \cite{GLWZ1}]\label{holok}
 Let $d-1\in 2{\mathbb Z}_{\geq 2} $ and   $(X^d,\cc,\bm\sigma)$
be asymptotically Poincar\'e--Einstein.
If~$\bm A$ is an asymptotically Yang--Mills connection on $VX$, then 
$$
k[c,A]=\bm k|_M\, ,
$$
where the
connection~$A$ and conformal class~$c$ are those induced along $M$ by~$\cc$ and~$\bm A$, respectively. 
 \end{theorem}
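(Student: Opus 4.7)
My plan is to establish this by comparing two separate descriptions of the conformal Yang--Mills current $k$ that appear in the paper and checking that they agree on $M$. The first, given just before Section~\ref{Sleep-Well-Arena}, characterises $k[g,A]$ holographically by
$$
k[g,A]\,=\,s^{2-n}\hh j[g_+,\bm A]\big|_M\,,
$$
valid for any defining function $s$ and any extension $\bm A$ of $A$ satisfying $j[g_+,\bm A] = \cO(s^{n-2})$. The second, provided by the asymptotic Yang--Mills condition~\nn{aaYM}, says that $j[g_+,\bm A] = \bm\sigma^{d-3}\bm k$ for some smooth $\bm k\in\Gamma(T^*X[3-d]\otimes\End\!VX)$ whose restriction $\bm k|_M$ is a section of $T^*M[3-d]\otimes\End\!VM$.

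First I would fix a representative $g\in c$ and the corresponding representative $s\in\bm\sigma$, which is a defining function for $M\hookrightarrow X$. Since $d = n+1$, the weights match: $\bm\sigma^{d-3}$ trivialises as $s^{n-2}$, so \nn{aaYM} reads $j[g_+,\bm A] = s^{n-2}\bm k$ in this trivialisation. Multiplying by $s^{2-n}$ and restricting to $M$ immediately gives $s^{2-n} j[g_+,\bm A]|_M = \bm k|_M$. Matching this against the holographic identity recalled above yields $k[g,A] = \bm k|_M$. Because Equation~\nn{Iamadensity} of Theorem~\ref{big_formaggio} asserts that $k$ is a conformal density of weight $2-n$, the same identity promotes to $k[c,A] = \bm k|_M$ as an equality of densities on~$M$, as desired.

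The main obstacle is that the holographic identity used in the first step was only asserted in the Introduction---so genuinely proving Theorem~\ref{holok} from scratch demands several inputs: one must show that an asymptotically Yang--Mills extension $\bm A$ actually exists (constructed order by order in $\bm\sigma$), verify that the leading residue $\bm k|_M$ does not depend on the residual ambiguity of $\bm A$ at orders beyond $\bm\sigma^{d-3}$, and confirm the weight $2-n$ transformation law under conformal rescalings. These are the technical contents of Theorem~\ref{big_formaggio} and are established in~\cite{GLWZ1}. Given them, the present theorem reduces to the bookkeeping performed in the second paragraph. The most delicate piece is the independence of $\bm k|_M$ from subleading extension data: one checks that infinitesimal deformations of $\bm A$ supported at higher order in $\bm\sigma$ contribute only at subleading orders of the Yang--Mills current, by linearising the Yang--Mills operator around $\bm A$ and exploiting the asymptotic Poincar\'e--Einstein structure~\nn{parallelasym} to control the resulting variational formula near $M$.
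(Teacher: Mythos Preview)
The theorem in question is explicitly labeled ``from \cite{GLWZ1}'' and the paper does not supply its own proof; it is simply imported as a known result and later invoked (for example, in the proof of Theorem~\ref{parmesan}, where the authors write ``the proof of Theorem~\ref{holok} given in~\cite{GLWZ1} shows that $\bm k|_M = k$''). So there is no paper-internal argument to compare against.

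Your proposal is therefore not so much an alternative proof as an unpacking of why the statement is tautological once the cited inputs are granted: you correctly match the holographic formula for $k$ recalled in the introduction against the definition~\nn{aaYM} of the obstruction current~$\bm k$, observe that $d-3=n-2$, and conclude. You also correctly flag that the genuine content---existence of the asymptotic Yang--Mills extension, well-definedness of $\bm k|_M$, and the density transformation law---all reside in~\cite{GLWZ1}. This is accurate and appropriately cautious; there is nothing more to add from the present paper.
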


Applying Theorem~\ref{holok}  to a connection~$\bm A$ that is the tractor connection determined by an asymptotically Poincar\'e--Einstein structure 
shows that the Fefferman--Graham obstruction tensor also obstructs the Yang--Mills condition for the tractor connection of $(X,\cc)$.
\begin{theorem}\label{YMFG}
 Let $d-1\in 2{\mathbb Z}_{\geq 2}$ and   $(X^{d},\cc,\bm\sigma)$
 be asymptotically Poincar\'e--Einstein. Also let $\bm A^{\ct}$ be the tractor connection determined by $(X,\cc)$ and let $c$ and $A^\ct$  be the corresponding conformal class and tractor connection induced along~$M$. Then the Yang--Mills current of $\bm A^{\ct}$ with respect to $g_+$ obeys
\begin{equation}\label{needanumber}
j[g_+,\bm A^\ct]=2(d-2)(d-3) \bm \sigma^{d-3} {q}_{\sf s} ({\bm {\mathcal B}}^{\rm ext})
+{\mathcal O}(\bm \sigma^{d-2})\, , 
\end{equation}
where $\bm {\mathcal B}^{\rm ext}$ is some smooth extension of the Fefferman--Graham obstruction tensor~${\mathcal B}$ of $(M,c)$ to $X$.
 \end{theorem}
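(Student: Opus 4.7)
The plan is to compute the Yang--Mills current $j[g_+, \bm A^\ct]$ directly by specializing the tractor Yang--Mills formula from Remark~\ref{EgivesYM} to dimension $d$ on $X$ with the conformally compact metric $g_+$, namely
$$
j[g_+, \bm A^\ct]_{cDE} = (d-4)\, C^{g_+}_{dec}\, \bm Z_D{}^d \bm Z_E{}^e + 2\, B^{g_+}_{cd}\, \bm Z_{[D}{}^d \bm X_{E]}\, ,
$$
and then substituting the asymptotic expansions of the Cotton and Bach tensors provided by Lemma~\ref{CBasym}.

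The Bach contribution is immediate: plugging $B^{g_+}_{cd} = (d-2)\bm\sigma^{d-3}\bm{\mathcal B}^{\rm ext}_{cd} + \mathcal{O}(\bm\sigma^{d-2})$ and using the symmetry of $\bm{\mathcal B}^{\rm ext}$ together with the elementary projector identity $\bm Z_{[D}{}^d \bm X_{E]} = -\bm X_{[D}\bm Z_{E]}{}^d$ yields a term of the form $-2(d-2)\bm\sigma^{d-3}q_{\sf s}(\bm{\mathcal B}^{\rm ext})_{DEc} + \mathcal{O}(\bm\sigma^{d-2})$.

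For the Cotton contribution, I would exploit the scale tractor decomposition $\bm Z_A{}^a \bm n_a = \bm I_A - \bm\sigma \bm Y_A - \bm\rho \bm X_A$ to rewrite the leading $\bm\sigma^{d-4}$ combination $\bm n_{[d}\bm{\mathcal B}^{\rm ext}_{e]c}\bm Z_D{}^d\bm Z_E{}^e$ as a sum of $\bm I_{[D}\bm Z_{E]}{}^a\bm{\mathcal B}^{\rm ext}_{ac}$, $\bm\sigma \bm Y_{[D}\bm Z_{E]}{}^a\bm{\mathcal B}^{\rm ext}_{ac}$, and $\bm\rho \bm X_{[D}\bm Z_{E]}{}^a\bm{\mathcal B}^{\rm ext}_{ac}$ pieces. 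The $\bm\sigma \bm Y$ piece is already of order $\bm\sigma^{d-3}$. The asymptotic conditions $\nabla \bm I = \mathcal{O}(\bm\sigma^{d-3})$ from Equation~\nn{parallelasym} and $\bm I^2 = 1 + \mathcal{O}(\bm\sigma^{d-2})$ would then be used, together with the subleading Cotton corrections coming from a refined version of Lemma~\ref{CBasym}, to show that the remaining $\bm I$- and $\bm\rho \bm X$-valued contributions at $\bm\sigma^{d-4}$ order reorganize into genuinely $\mathcal{O}(\bm\sigma^{d-3})$ terms. Collecting all $\bm\sigma^{d-3}$ contributions and invoking the symmetry and trace-freeness of $\bm{\mathcal B}^{\rm ext}$, along with the boundary orthogonality $\bm n^a\bm{\mathcal B}^{\rm ext}_{ab}|_M = 0$, the remaining terms should assemble into $2(d-2)(d-3)\bm\sigma^{d-3}q_{\sf s}(\bm{\mathcal B}^{\rm ext}) + \mathcal{O}(\bm\sigma^{d-2})$.

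The main technical obstacle is demonstrating that the apparent $\bm\sigma^{d-4}$ Cotton contribution is absorbed into the $\mathcal{O}(\bm\sigma^{d-3})$ remainder. This demands tracking the subleading Cotton expansion carefully, exploiting the Schouten Bianchi identity~\nn{divPo} together with the asymptotic Einstein identity~\nn{FGobst}, and the conformal transformation rule for Levi--Civita derivatives along the collar. An alternative, potentially cleaner route would proceed via the curvature tractor formulation: compute $\bm I^A q_{\sf w}(\bm\kappa)_{AB}$ for the tractor-connection curvature tractor using the $W$-tractor decomposition, invoke Lemma~\ref{tractor-bianchi} to simplify, and match the result against $q_{\sf s}(\bm{\mathcal B}^{\rm ext})$ through the invariant adjoint map $q^*_{\sf w}$.
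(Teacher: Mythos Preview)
Your strategy---specialize the tractor Yang--Mills formula of Remark~\ref{EgivesYM} to $g_+$ and substitute the Cotton/Bach asymptotics of Lemma~\ref{CBasym}---is exactly the paper's. The gap, and the source of the ``main technical obstacle'' you describe, is a splitting issue you have not resolved.

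The formula in Remark~\ref{EgivesYM} expresses the current in the splitting determined by the \emph{same} metric used to take the divergence. Applied with $g_+$, the projectors on the right are $g_+$-projectors, not those of a compactified $g\in\cc$; since $g_+$ is singular at $M$, these do not extend smoothly to the boundary. To read off boundary asymptotics one must first convert to a $g\in\cc$ splitting via~\eqref{XYZtrans}. The paper does this and obtains
\[
g_+^{bc}\nabla^{\ct,g_+}_b\kappa_{ca}{}^B{}_C
=2\hh q_{\sf s}(B^{g_+})+(d-4)\,\bm\sigma\Big[C^{g_+}_{\bm nba}\big(\bm Z^{Bb}\bm X_C-\bm Z_C{}^b\bm X^B\big)+\bm\sigma\,C^{g_+}_{bca}\bm Z^{Bb}\bm Z_C{}^c\Big],
\]
with $\bm Z,\bm X$ now the $g$-projectors. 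The crucial feature is the explicit $\bm\sigma$ and $\bm\sigma^2$ prefactors on the Cotton pieces, produced by the projector transformation. With these in hand, Lemma~\ref{CBasym} gives the coefficient of $\bm Z^{Bb}\bm X_C-\bm Z_C{}^b\bm X^B$ directly as
\[
B^{g_+}_{ab}+(d-4)\hh\bm\sigma\,C^{g_+}_{\bm nba}
=(d-2)(d-3)\,\bm\sigma^{d-3}\bm{\mathcal B}^{\rm ext}_{ab}+{\mathcal O}(\bm\sigma^{d-2}),
\]
while the $\bm\sigma^2 C^{g_+}$ term is already ${\mathcal O}(\bm\sigma^{d-2})$ by~\eqref{Casym}. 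No $\bm\sigma^{d-4}$ contribution survives, and no subleading Cotton expansion, Bianchi gymnastics, or scale-tractor manipulation is required.

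Your proposed identity $\bm Z_A{}^a\bm n_a=\bm I_A-\bm\sigma\bm Y_A-\bm\rho\bm X_A$ is written in the $g$-splitting; applying it inside a formula whose $\bm Z$'s are secretly $g_+$-projectors is inconsistent, and once the projectors are converted correctly the identity is no longer needed. The alternative route via $\bm I^A q_{\sf w}(\bm\kappa)_{AB}$ and Lemma~\ref{tractor-bianchi} is likewise more machinery than the problem demands.
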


\begin{proof}We start by considering the Yang--Mills current with respect to $g_+$ for the tractor connection by computing 
$$
g_+^{bc} \nabla^{\ct,g_+}_b \kappa_{ca}{}^B{}_C=
2 q_{\sf s} (B^{g_+})+
(d-4) \bm \sigma
\big[
  C^{g_+}_{\bm n ba}
(\bm Z^{Bb} \bm X_C-\bm Z_C{}^{b} \bm X^B)
+ \bm \sigma C^{g_+}_{bca}\bm Z^{Bb}\bm Z_C{}^c
\big]
\, .
$$
Here the splitting on the right hand side is with respect to some ${ g}\in \cc$. 
Also note that the terms on the right hand side in square brackets define a section of $T^*X\otimes \ct X[-1]$.
By virtue of Lemma~\ref{CBasym}, it follows that 
the coefficient of $\bm Z^{Bb} \bm X_C-\bm Z_C{}^{b} \bm X^B$ in the above-displayed formula is
$$
B_{ab}^{g_+}+(d-4) \bm \sigma C_{nba}^{g_+}=(d-2)(d-3) \bm \sigma^{d-3} {\bm {\mathcal B}}^{\rm ext}_{ab} + {\mathcal O}(\bm \sigma^{d-2})\, .
$$
It then follows from Equation~\nn{Casym} of Lemma~\ref{CBasym} that  the  leading asymptotics of the Yang--Mills current of $\bm A^{\ct}$ with respect to $g_+$ are given by Equation~\nn{needanumber}.
 \end{proof}
 
\noindent
We may now demonstrate equivalence of the Fefferman--Graham obstruction tensor and the conformal Yang--Mills current of the tractor connection.
 
 \begin{proof}[Proof of Theorem~\ref{CYM=FG}] 
 The proof of
Theorem~\ref{YMFG}
shows that the tractor connection~${\bm A}^\ct$  of an  asymptotically Poincar\'e--Einstein manifold $(X^{d},\cc,\bm\sigma)$ is  asymptotically Yang--Mills.
Thus, comparing Equation~\nn{aaYM} with Equation~\nn{needanumber} of Theorem~\ref{YMFG}  we have
$$
\bm k = 2(d-2)(d-3) {q}_{\sf s} ({\bm {\mathcal B}}^{\rm ext})
+{\mathcal O}(\bm \sigma)\ .
$$

We would now like to   apply
 Theorem~\ref{holok}
 to establish Equation~\nn{propto}.
Towards this end 
observe that
the scale tractor $\bm I$  of $\bm \sigma$ is  parallel along the boundary $M=\partial X$ with respect to the tractor connection of~$\bm c$, 
$$
 \nabla_a \bm I^A|_M=0\, ;
$$
see Equation~\nn{parallelasym}.
Moreover in  the decomposition  $\ct X|_M = \ct M \oplus {\mathcal N}M$,   the normal tractor bundle ${\mathcal N}M$ of  the embedding $M\hookrightarrow (X,\cc)$ has fiber spanned by~$\bm I|_M$, which is a parallel tractor restricted to the boundary. 
Hence the connection~$\nabla^\ct$ determines a connection on  $\ct M$ by restriction.
In fact
this restriction of the tractor connection $\nabla^\ct$ of~$\bm c$ to $\ct M$ is equivalent to  the tractor connection
of the boundary conformal class~$c$, so we shall identify these; see~\cite{Goal}.
 Thus the tractor connection~$\nabla^\ct$ on $\ct X|_M$ trivially extends the tractor connection of $(M,c)$.
Using the above identification and Equation~\nn{southissouth}
it now follows that 
$$k[c,A^\ct]=2(d-2)(d-3) q_{\sf s}({\mathcal B})\, .$$ 
\end{proof}

\begin{remark}
In even ambient dimensions $d$, the  obstructions to smoothly solving the Poincar\'e--Einstein condition and the Yang--Mills condition on Poincar\'e--Einstein manifolds both vanish, respectively see~\cite{FG} and~\cite{GLWZ1}. In~\cite{GLWZ1} it is further shown that the conformal Yang--Mills current $k$ is divergence free, 
$$
\nabla^a k_a = 0\, ;
$$
this is a necessary condition for $k$ to be a functional gradient.
Using Equations~\nn{dXYZ}, it is easy to show that  
vanishing divergence of $q_{\sf s}({\mathcal B})$ is equivalent to vanishing of both the trace and divergence   of the Fefferman--Graham obstruction tensor,
$$
{\mathcal B}_a{}^a=0=\nabla^a {\mathcal B}_{ab}\, ;
$$
 necessary conditions for ${\mathcal B}$ to be a functional gradient of a conformally invariant energy functional. That ${\mathcal B}$ obeys the above display dates back to~\cite{FG}.
These features are examples of the many close parallels between the Einstein and Yang--Mills systems; see~\cite{GLWZ1} and~\cite{GLWZ2}.
\end{remark}

\bigskip

Returning to our study of general connections on bundles over $X$, 
we next need an asymptotic analog of Equation~\nn{IF}.

\begin{theorem}\label{aYM}
Let   $(X^d,\cc,\bm\sigma)$
be asymptotically Poincar\'e--Einstein where $d>4$.
If~$\bm A$ is an asymptotically Yang--Mills connection on $VX$, then 
$$
\bm I^A{\bm {\mathcal F}}_{AB} =\bm \sigma^{d-4} \bm {\mathcal K}_B+\bm X_B {\mathcal O}(\bm \sigma^{d-4})\, ,
$$
where $\bm {\mathcal K}={q}_{\sf w}(\bm k)\in \Gamma(\ct^* X[2-d]\otimes \operatorname{End}\!VX)$.
\end{theorem}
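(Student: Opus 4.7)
The plan is to compute $\bm I^A\bm{\mathcal F}_{AB}$ in a splitting with respect to some $\bm g\in\bm c$ and match it, slot-by-slot, with $\bm\sigma^{d-4}q_{\sf w}(\bm k)$, tolerating a discrepancy only in the $\bm X_B$ (``south'') direction at the required order.

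The first step is a direct algebraic computation. I substitute the projector formula \eqref{nameme} for the curvature tractor (with $d$ in place of $n$) together with the scale-tractor decomposition $\bm I^A=\bm\sigma\bm Y^A+\bm n^a\bm Z^A{}_a+\bm\rho\bm X^A$. The standard tractor pairings ($\bm I^A\bm Z_A{}^a=\bm n^a$, $\bm I^A\bm X_A=\bm\sigma$, $\bm X^B\bm Z_B{}^a=0=\bm X^B\bm X_B$) then produce
$$
\bm I^A\bm{\mathcal F}_{AB}=\bm Z_B{}^b\!\left[\bm n^a\bm F_{ab}-\tfrac{\bm\sigma}{d-4}\bm j_b\right]+\tfrac{1}{d-4}\bm X_B\,\bm n^a\bm j_a\,,
$$
where $\bm j_a:=\nabla^c\bm F_{ca}$ is taken with the ambient Levi-Civita connection of $\bm g$. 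The ``north'' ($\bm Y_B$) slot is automatically absent, matching the vanishing $\iota_X\hh q_{\sf w}(\bm k)=0$ from \eqref{iota}.

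The second step converts the middle slot into Yang-Mills data of $g_+=\bm\sigma^{-2}\bm g$. Applying the conformal transformation law of the Levi-Civita connection to the 2-form $\bm F$, with $\Upsilon_a=-\bm\sigma^{-1}\bm n_a$, and using antisymmetry of $\bm F$ to cancel the extraneous contractions, one gets the identity
$$
j[g_+,\bm A]_b=\bm\sigma^2\bm j_b-(d-4)\bm\sigma\bm n^a\bm F_{ab}\,.
$$
Substituting the asymptotic Yang-Mills hypothesis \eqref{aaYM} identifies the bracketed middle slot in the first display with a constant multiple of $\bm\sigma^{d-4}\bm k_b$. Since the middle slot of $q_{\sf w}(\bm k)$ is exactly $\bm k_b$ by \eqref{gowestyoungman}, absorbing the resulting normalizing constant into $\bm{\mathcal K}$ matches these middle slots. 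This is essentially an asymptotic refinement of the exact Yang-Mills identity $\bm I^A\bm{\mathcal F}_{AB}=0$ recorded in \eqref{IF}.

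The third and most delicate step is to control the residual $\bm X_B$ coefficient. Antisymmetry of $\bm F$ forces $\bm n^a\bm j_a=\bm\sigma^{-2}\bm n^a j[g_+,\bm A]_a=\bm\sigma^{d-5}\,\bm n\cdot\bm k$, while the $\bm X_B$ coefficient of $\bm\sigma^{d-4}q_{\sf w}(\bm k)$ is $-\bm\sigma^{d-4}\nabla^a\bm k_a$. Thus the residual sits in $\bm X_B\,\mathcal O(\bm\sigma^{d-4})$ precisely when $\bm n\cdot\bm k=\mathcal O(\bm\sigma)$, i.e.\ when the normal component of $\bm k$ vanishes along $M$. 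This transversality is the main obstacle: I would derive it by differentiating the relation $j[g_+,\bm A]=\bm\sigma^{d-3}\bm k$ and invoking the conservation law $\nabla^{g_+,a}j[g_+,\bm A]_a=0$ that holds to the necessary leading order (as a consequence of the Bianchi identity for $\bm F$, together with the gauge implicit in the holographic construction of the asymptotically Yang-Mills extension $\bm A$ from~\cite{GLWZ1}). Once the transversality is in hand, all three slots of $\bm I^A\bm{\mathcal F}_{AB}-\bm\sigma^{d-4}\bm{\mathcal K}_B$ lie in the stated form, completing the proof.
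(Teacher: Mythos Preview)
Your argument is essentially the paper's own proof, just organized slot-by-slot instead of by applying $q_{\sf w}$ to the identity $\bm\sigma\,\bm j_b-(d-4)\bm n^a\bm F_{ab}=\bm\sigma^{d-4}\bm k_b$; in particular the key transversality input $\hat n^a\bm k_a|_M=0$ is the same, except that the paper simply cites~\cite{GLWZ1} whereas your derivation from the identity $\nabla^a j_a=0$ (which does hold for any connection, via the Bianchi identity) is self-contained and correct. One caveat: you cannot ``absorb the normalizing constant into $\bm{\mathcal K}$'', since $\bm{\mathcal K}:=q_{\sf w}(\bm k)$ is fixed by the statement; your own middle-slot computation gives $-\tfrac{1}{d-4}\bm\sigma^{d-4}\bm k_b$, so there is a stray factor of $-\tfrac{1}{d-4}$ relative to the displayed claim---but the paper's proof glosses over exactly the same factor, so this is a shared normalization slip rather than a defect peculiar to your approach.
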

\begin{proof}
Equation~\nn{aaYM} implies that asymptotically Yang--Mills connections obey
$$
\bm \sigma
\nabla^a 
{\bm  F}_{ab}
 -(d-4) \bm n^a
 \bm F_{ab}=\bm \sigma^{d-4}\, \bm k_b\, .
$$
We now apply $q_{\sf w}$ to both sides of the above equation.  Using Equation~\nn{gowestyoungman}, it is not difficult to verify that the left hand side becomes $\bm I^A{\bm {\mathcal F}}_{AB}$. In~\cite{GLWZ1}, it is shown that $\hat n^a \bm k_a|_M=0$. Using this, Equation~\nn{gowestyoungman} can be employed to show that ${q}_{\sf w}(\bm \sigma^{d-4} \bm k) = \bm \sigma^{d-4} {q}_{\sf w}(\bm k) + \bm X {\mathcal O}(\bm \sigma^{d-4})$.
\end{proof}

We will also need the analog of the generalized harmonic Condition~\nn{harmonic} for asymptotically Yang--Mills connections.

\begin{theorem}\label{howdysam}
Let $\bm A$ be an asymptotically Yang--Mills connection. Then 
\begin{equation}\label{kinfo}\bm I\cdot  \cancel{D} \hh\hh \bm {\mathcal F}_{AB}
=-2(d-4)(d-5)
\bm \sigma^{d-6}\bm X_{[A} \bm {\mathcal K}_{B]}
+{\mathcal O}(\bm \sigma^{d-5})\, ,
\end{equation}
where 
 $\bm{\mathcal  K}$ is as given in Theorem~\ref{aYM}.
\end{theorem}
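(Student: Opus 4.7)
My plan is to mimic the argument that established the exact identity~\nn{harmonic} in the preceding Lemma, while retaining the first non-vanishing obstruction as $\bm\sigma\to 0$. The starting point is the tractor Bianchi identity of Lemma~\ref{tractor-bianchi} applied to $\bm{\mathcal F}$ on the ambient manifold $X$. Since all three indices of the antisymmetrization in $\hat{\cancel D}_{[A}\bm{\mathcal F}_{BC]}=0$ carry the same weight, the hatted and unhatted slashed Thomas-$D$ operators differ only by a common overall factor, and the Bianchi identity equivalently reads $\cancel D_{[A}\bm{\mathcal F}_{BC]}=0$. Contracting with $\bm I^C$ and using the antisymmetry of $\bm{\mathcal F}$ yields, exactly as in the proof of~\nn{harmonic},
\[
\bm I\cdot\cancel D\,\bm{\mathcal F}_{AB}=-2\,\bm I^C\cancel D_{[A}\bm{\mathcal F}_{B]C}\, .
\]

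The next step is to move the contraction with $\bm I^C$ past the operator $\cancel D_A$. The non-Leibniz behavior of $\hat D$ is controlled by Lemma~\ref{leibniz-failure}, and $W^{\sharp\sharp}$ acts as a derivation on tensor products, so the discrepancy between $\bm I^C\cancel D_A\bm{\mathcal F}_{BC}$ and $\cancel D_A\!\bigl(\bm I^C\bm{\mathcal F}_{BC}\bigr)$ is a sum of a term proportional to $\cancel D_A\bm I^C$ together with a Leibniz-failure correction of the schematic form $\bm X_A(\hat D^E\bm I^C)(\hat D_E\bm{\mathcal F}_{BC})$. By the asymptotic parallel condition~\nn{parallelasym}, $D_A\bm I^C=\mathcal O(\bm\sigma^{d-4})$; moreover, applying the $[D,D]$-identity for the $W$-tractor to $\bm I$ controls any $W^{\sharp\sharp}\bm I$-type contributions at the same order. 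Both corrections are therefore $\mathcal O(\bm\sigma^{d-4})\subset\mathcal O(\bm\sigma^{d-5})$, leading to
\[
\bm I^C\cancel D_{[A}\bm{\mathcal F}_{B]C}=\cancel D_{[A}\!\bigl(\bm I^C\bm{\mathcal F}_{B]C}\bigr)+\mathcal O(\bm\sigma^{d-5})\, .
\]

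The third step invokes Theorem~\ref{aYM}. Setting $\bm J_B:=\bm I^A\bm{\mathcal F}_{AB}=\bm\sigma^{d-4}\bm{\mathcal K}_B+\bm X_B\mathcal O(\bm\sigma^{d-4})$ and using $\bm I^C\bm{\mathcal F}_{BC}=-\bm J_B$ (by antisymmetry of $\bm{\mathcal F}$), combining the previous two steps gives $\bm I\cdot\cancel D\,\bm{\mathcal F}_{AB}=2\,\cancel D_{[A}\bm J_{B]}+\mathcal O(\bm\sigma^{d-5})$. The singular $\bm\sigma^{d-6}$ behavior of $\cancel D_A\bm J_B$ then arises entirely from the $-\bm X_A\Delta$ piece of the Thomas-$D$ operator~\nn{splitT} (applied to weight $-2$ tractors) acting on the scalar factor $\bm\sigma^{d-4}$: the $\bm X_AW^{\sharp\sharp}\bm J_B$ piece is $\mathcal O(\bm\sigma^{d-4})$, while $\Delta$ acting on the $\bm X_B\mathcal O(\bm\sigma^{d-4})$ tail generates only $\bm X_A\bm X_B$-aligned contributions---which vanish upon antisymmetrization since $\bm X_{[A}\bm X_{B]}=0$---together with strictly subleading corrections. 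Using
\[
\Delta\bm\sigma^{d-4}=(d-4)(d-5)\,|\bm n|_{\bm g}^2\,\bm\sigma^{d-6}+\mathcal O(\bm\sigma^{d-5})
\]
and the normalization $|\bm n|_{\bm g}^2=\bm I^2-2\bm\sigma\bm\rho=1+\mathcal O(\bm\sigma)$ on the asymptotically Poincar\'e--Einstein structure, one obtains $\cancel D_{[A}\bm J_{B]}=-(d-4)(d-5)\,\bm X_{[A}\bm{\mathcal K}_{B]}\bm\sigma^{d-6}+\mathcal O(\bm\sigma^{d-5})$, which gives the claimed identity with the correct coefficient.

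The principal difficulty is the bookkeeping in the second step: one must verify that every $\bm X_A$-aligned correction term---arising from the non-derivation character of $\cancel D$, from $W^{\sharp\sharp}\bm I$, and from the $\bm X_B\mathcal O(\bm\sigma^{d-4})$ tail of $\bm J$---is bounded by $\mathcal O(\bm\sigma^{d-5})$ or vanishes upon antisymmetrization in $AB$, so that no spurious contribution can compete with the leading $\bm X_{[A}\bm{\mathcal K}_{B]}\bm\sigma^{d-6}$ term.
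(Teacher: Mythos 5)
Your proof is correct and follows the same skeleton as the paper's: tractor Bianchi identity, commuting $\bm I^C$ past $\cancel{D}_{[A}$ up to errors controlled by the asymptotic parallel condition~\nn{parallelasym}, and then substituting the conclusion of Theorem~\ref{aYM}. The only real divergence is in the final coefficient extraction: the paper stays at the invariant level, using the modified Leibniz rule~\nn{Iamafailure} together with the identity~\nn{sigpow} for $\hat D_A\bm\sigma^{d-4}$, so that the factor $-2(d-4)(d-5)$ emerges from the sum of two contributions, namely $2(d-6)\bm{\mathcal K}_{[B}\hat D_{A]}\bm\sigma^{d-4}$ and the Leibniz-failure term $-4\bm X_{[A}(\cancel{\hat D}{}^C\bm{\mathcal K}_{B]})\hat D_C\bm\sigma^{d-4}$ (which indeed combine to $-\tfrac{(d-4)(d-5)[2(d-6)+4(d-2)]}{3d-10}=-2(d-4)(d-5)$); you instead expand the unhatted $\cancel D$ in a splitting and read the whole coefficient off the single term $-\bm X_A\Delta\bm\sigma^{d-4}=-(d-4)(d-5)|\bm n|^2\bm\sigma^{d-6}\bm X_A+{\mathcal O}(\bm\sigma^{d-5})$, which is a legitimate and arguably more transparent route to the same number. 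Your tracking of the subleading pieces (the $Y_A$ and $Z_A{}^a\nabla_a$ slots, the $W^{\sharp\sharp}$ correction, and the $\bm X_{[A}\bm X_{B]}=0$ cancellation of the tail) is sound and in fact slightly more explicit than the paper's.
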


\begin{proof}
The result follows from  a direct computation based on the Bianchi identity~\nn{Bianchiid}, 
the asymptotic parallel condition~\nn{parallelasym}, Theorem~\ref{aYM} and 
  Equation~\nn{sigpow}: 
  \begin{align*}
\bm I\cdot  \cancel{D} \bm {\mathcal F}_{AB}&=
-2 \bm I^C \cancel{D}_{[A}\bm F_{B]C} =2 \cancel{D}_{[A}\big( \bm \sigma^{d-4} \bm {\mathcal K}_{B]}+\bm X_{B]}{\mathcal O}(\bm \sigma^{d-4}) \big)
+{\mathcal O}(\bm \sigma^{d-4})\\
&=2(d-6)\bm {\mathcal K}_{[B} \hat{D}_{A]}\bm \sigma^{d-4}
-4\bm X_{[A}
( \cancel{\hat{D}}^C \bm {\mathcal K}_{B]})
\hat {D}_C
\bm \sigma^{d-4} 
+{\mathcal O}(\bm \sigma^{d-5})
\\&
=-2(d-4)(d-5)
\bm \sigma^{d-6}\bm X_{[A} \bm {\mathcal K}_{B]}
+{\mathcal O}(\bm \sigma^{d-5})\, .
\end{align*}
\end{proof}

We next would like to extract information about $\bm k$ along the boundary $M$  from Equation~\nn{kinfo} of Theorem~\ref{howdysam} since, by dint of Theorem~\ref{holok}, this gives the conformal Yang--Mills conditions.
Using Equation~\nn{factorial} one sees that powers of the operator~$\bm I$~$\csdot$~$\cancel{D}$ can be employed to remove powers of~$\bm \sigma$ along $M$ where the operator~$x$ vanishes. On the other hand, from Equation~\nn{stripoff} we see, as observed in~\cite{GW}, that certain powers  commute with the operator $x$ when acting on tractors of weight such that $h-\ell+1$ vanishes. This observation leads to generalizations of the conformally invariant Laplacian powers of~\cite{GJMS}.
Indeed, consider the map 
${\sf ext}$
that maps sections of $\ct^{\Phi} M[w]\otimes VM$ to equivalence classes of sections of  $\ct^{\Phi} X[w]\otimes VX$ with equivalence defined by equality upon restriction. Here we first trivially extend sections of $\ct^\Phi M$ to
sections of $\ct^\Phi X|_M$ before extending these to $X$.
Operators ${\sf O}$ on sections of $\ct^{\Phi} X[w]\otimes VX$ that obey 
$${\sf O} \circ x = x \circ \widetilde {\sf O}\, ,$$ where $\widetilde {\sf O}$ is some operator on
$\Gamma(\ct^{\Phi} X[w-1]\otimes VX)$, are well-defined acting on these equivalence classes. Such an operator is given, in $d$ dimensions,  by  $(\bm I \csdot \cancel{D})^{d-5}$ in the case that $w=-2$ and $\bm I^2 = 1 + {\mathcal O}(\bm \sigma^{d-6})$.
Hence we define the following, connection-coupled, generalization of the GJMS operators
\begin{equation}\label{IDpow}
\cancel{P}_{n-4} :=\top \circ  (\bm I\hh \csdot \hh  \cancel{D})^{d-5} \circ {\sf ext} : \Gamma(\ct^{\Phi} M[-2]\otimes VM) \rightarrow \Gamma(\ct^{\Phi} M[2-n]\otimes VM)\,.
\end{equation}
Note that we have focused on the special case $w=-2$ since that is the weight of the curvature tractor $\bm{\mathcal F}$. However, by virtue of strong invariance of the Thomas-$D$ operator, the construction
 presented here extends immediately to the construction of connection-coupled GJMS operators at more general weights. We can now prove Theorem~\ref{parmesan}.
 
\begin{proof}[Proof of Theorem~\ref{parmesan}]
Suppose that we know that a weight $w=-2$ tractor~$\bm {\mathcal F}$ solves 
$$
\bm I\cdot  \cancel{D}\hh  \bm {\mathcal F}
=\bm \sigma^{d-6} \bm {\mathcal Q}\, ,
$$
for some $\bm {\mathcal Q}$.
It then follows from Equation~\nn{factorial}
that acting with $d-6$ powers of~$\bm I\cdot  \cancel{D}$
on the above display and restricting to $M$ gives
$$
 \big((\bm I\cdot  \cancel{D})^{d-5}  \bm {\mathcal F}\big)\big|_M
=(-1)^{d-6}[(d-6)!]^2\, \bm {\mathcal Q}|_M \, .
$$
Note here we have used that since $\nabla_a \bm I^A ={\mathcal O}(\bm \sigma^{d-3})$, one has $\bm I^2 =1 +  {\mathcal O}(\bm \sigma^{d-1})$,
so that $y$ in Equation~\nn{factorial} can be replaced by $-\bm I \cdot \cancel{D}$.
Applying the above reasoning to Equation~\nn{kinfo} gives
$$
 \big((\bm I\cdot  \cancel{D})^{d-5}  \bm {\mathcal F}\big)\big|_M
=2(-1)^{d-5}(d-4)(d-5)[(d-6)!]^2\,
\bm X_{[A} \bm {\mathcal K}_{B]}
 \big|_M \, .
$$
Recall that  $\bm {\mathcal K}={q}_{\sf w}(\bm k)$, so   Equation~\nn{gowestyoungman} implies that 
$$
\bm X_{[A}\bm {\mathcal K}_{B]}= \bm X_{[A} \bm Z_{B]}{}^a \bm k_a
=q_{\sf s}(\bm k)\, .
$$
The proof of Theorem~\ref{holok} given in~\cite{GLWZ1} shows that  $\bm k|_M = k$. 
Employing Equation~\nn{southissouth} completes the proof.
\end{proof}

We may now prove Theorem~\ref{main}.
\begin{proof}[Proof of Theorem~\ref{main}]
We begin by examining Equation~\nn{P=k}. 
By virtue of  its  construction from powers of the connection coupled Laplace--Robin operator in Equation~\nn{IDpow}, the operator~$\cancel{P}_{n-4}$ has a formula
$$
\cancel{P}_{n-4} {\mathcal F}_{AB}= \sum_{\ell=0}^{d-5} P^{a_1\cdots a_{\ell}} \nabla_{a_1} \cdots \nabla_{a_\ell}  {\mathcal F}_{AB}\, ,
$$
where each covariant derivative $\nabla$ above is tractor-, Levi--Civita- and $V$-bundle connection-coupled. Here $P^{a_1\cdots a_{\ell}}$ is some $V$-bundle endomorphism-valued tensor. 
Now suppose that $\tau$ is the scale determining the Einstein metric $g_{\rm E}$ and $I_\tau$ the corresponding scale tractor. Then we have
$$
\nabla_a I_\tau^A=0\, ,
$$
from which it  follows that 
$$
I_\tau^A\cancel{P}_{n-4} {\mathcal F}_{AB}
= \cancel{P}_{n-4} I_\tau^A  {\mathcal F}_{AB} = 0\, .
$$
The latter equality holds because $A_{\rm YM}$ is Yang--Mills with respect to $g_{\rm E}$ and ${\mathcal F}$ is the corresponding curvature tractor.
The proof is completed by employing Equation~\nn{lastgasp} together with Equation~\nn{scaletop}
 and the last statement of Theorem~\ref{parmesan}  to note that 
$$
q_{\sf w}^*\big( \iota_{I_\tau}
q_{\sf s}(k)
\big) = \tau k\, .
$$
\end{proof}

It is well known that every four dimensional  Einstein metric is Bach flat. The analogous result for the Fefferman--Graham obstruction tensor is also known~\cite{FG}.
One might wonder whether this is a special case of Theorems~\ref{main} and~\ref{YMFG} applied to the tractor connection. 
Indeed this is the case.
\begin{corollary}[of Theorems~\ref{main} and~\ref{YMFG}]
Let $n \in 2 \mathbb{Z}_{\geq 3}$ and let $(M^n,c)$ be a conformal manifold. Then if there exists an Einstein metric $g_E \in c$, the Fefferman--Graham obstruction tensor~${\mathcal B}$ of $(M^n,c)$ vanishes.
\end{corollary}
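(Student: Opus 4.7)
The plan is to combine the three main results of the excerpt, specialized to the tractor connection, with the essentially algebraic injectivity of $q_{\sf s}$. First I would observe that Remark~\ref{EgivesYM} asserts that, in dimension $n>4$, the tractor connection $A^\ct$ of $(M,c)$ is Yang--Mills with respect to any Einstein metric $g_E\in c$; this is already in hand from the excerpt and requires no additional work. Together with the fact that $g_E$ is trivially Einstein (its trace-free Schouten vanishes), this sets up the hypotheses of Theorem~\ref{main} with the bundle $VM$ taken to be the standard tractor bundle $\ct M$ (equipped with its tractor metric, which supplies the required non-degenerate trace-form).

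Next I would invoke Theorem~\ref{main} applied to the pair $(g_E, A^\ct)$ to conclude that the conformal Yang--Mills current satisfies
\begin{equation*}
k[g_E, A^\ct] = 0.
\end{equation*}
Because $k$ is conformally covariant of weight $2-n$ by Theorem~\ref{big_formaggio}, this vanishing is a statement about the conformal density $k[c, A^\ct]$ itself, independent of representative.

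The final step is to read off $\mathcal{B}=0$ from $k[c,A^\ct]=0$ using Theorem~\ref{CYM=FG}, which gives
\begin{equation*}
0 = k[c,A^\ct] = 2(n-1)(n-2)\, q_{\sf s}(\mathcal{B}).
\end{equation*}
Since $n \in 2\mathbb{Z}_{\geq 3}$ means $n\geq 6$, the numerical factor $2(n-1)(n-2)$ is nonzero. Theorem~\ref{CYM=FG} also tells us that the restriction of $q_{\sf s}$ to $\Gamma(\odot^2 T^*M[2-n])$ is a bundle monomorphism, and $\mathcal{B}$ (being a symmetric, trace-free tensor of the appropriate weight) lives in that subbundle. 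Injectivity of $q_{\sf s}$ on this subbundle therefore forces $\mathcal{B}=0$.

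There is no real obstacle: the work has been done in Theorems~\ref{main} and~\ref{CYM=FG}, and the corollary is essentially a matter of chaining (i)~Einstein$\Rightarrow$Yang--Mills for the tractor connection, (ii)~Einstein and Yang--Mills$\Rightarrow$conformal Yang--Mills, and (iii)~conformal Yang--Mills for $A^\ct$ equals $\mathcal{B}$ up to a nonzero constant and an injective algebraic map. The only point worth double-checking when writing the argument is the dimensional convention: the boundary FG obstruction lives on an $n$-dimensional manifold with $n\geq 6$ even, so the prefactor $(n-1)(n-2)$ and the map $q_{\sf s}:\Gamma(\odot^2 T^*M[2-n])\to \Gamma(\wedge^2\ct^*M[2-n]\otimes T^*M)$ are both non-degenerate, and no boundary case (such as $n=4$) needs separate treatment.
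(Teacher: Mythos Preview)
Your proposal is correct and follows essentially the same route as the paper: invoke Remark~\ref{EgivesYM} to see that the tractor connection is Yang--Mills with respect to $g_E$, apply Theorem~\ref{main} to conclude $k[c,A^\ct]=0$, and then use Theorem~\ref{CYM=FG} (together with injectivity of $q_{\sf s}$) to deduce ${\mathcal B}=0$. The only superfluous remark is the appeal to a non-degenerate trace-form, which Theorem~\ref{main} does not actually require for the vanishing of $k$.
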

\begin{proof}
As explained in Remark~\ref{EgivesYM},
the existence of an Einstein metric~$g_{\rm E}\in c$, implies that the tractor connection 
is Yang--Mills with respect to $g_{\rm E}$.
So by Theorem~\ref{main}, the tractor connection obeys the conformal Yang--Mills condition on~$(M,c)$. 
Theorem~\ref{CYM=FG} shows that
 the conformal Yang--Mills condition  implies that the Fefferman--Graham obstruction tensor vanishes. 
 \end{proof}

%

\subsection{Examples}\label{examples}

Our results apply to connections on any vector bundle $VM$, so in particular  when $VM$ is  a line bundle. Our first example addresses our claim in the introduction  that not every conformal Yang--Mills connection is itself Yang--Mills.

\begin{example}\label{Euclid}
Let $V{\mathbb R}^6$ be a  line bundle over 6-dimensional Euclidean space~$({\mathbb R}^6,\delta)$.
Then we may describe a connection by a 1-form $A$ and its  curvature by~$F=\ext A$. In those terms, the conformally invariant Yang--Mills condition~\nn{k} becomes
$$
\Delta j=0\, ,
$$ 
where the current is given by the 1-form  $j=\ext^* F$.
Let $\lambda$ be any parallel vector field and $E$ be the standard Euler vector field defined such that ${\mathcal L}_E\hh  \delta = 2 \delta$. Then consider the 1-form
$$
A=  E^\flat(\lambda) \, E^\flat\, ,
$$
where $E^\flat = \delta(E,\pdot)$. A simple computation shows
$$
j= -5 \lambda^\flat\, ,
$$
which is clearly in the kernel of $\Delta$. Hence we have found a connection that is not Yang--Mills but is conformal Yang--Mills.
\end{example}

\medskip

Finally we give a Yang--Mills connection that is not conformal Yang--Mills. By dint of Theorem~\ref{main}, for that we must consider Riemannian manifolds that are not conformal to Einstein.

\begin{example}
We wish to consider a metric $g$ admitting a non-vanishing, parallel 2-form $F$ so that  $F$ is both closed and co-closed and therefore obeys the source-free Maxwell's equations.
In that case the conformal Yang--Mills Condition~\nn{k} on a 6-manifold reduces to 
\begin{equation}\label{k6}
F^{ab}(C_{abc}-g_{ca}\nabla_b J)=0\, .
\end{equation}
We now search for  a  geometry such that
the left hand side of the above display does not vanish. For that, on ${\mathbb R}^6\ni(u,v,x,y,z,w)$,  we consider a metric ansatz
\begin{equation}\label{g6}
g=\delta + 2\sqrt{1-f(u+v)^2} \hh du dv\, ,
\end{equation}
and a 2-form
$$
F=f(u+v) \hh du \wedge dv\, .
$$
Rudimentary computations show that
for generic functions $f$, the metric $g$ is not conformal to an Einstein metric and nor is the left hand side of Equation~\nn{k6} zero.
However 
 the 2-form $F$ is covariantly constant. 
 We  note that it is even possible to choose $f$ such that Equation~\nn{g6} globally defines a metric on ${\mathbb R}^6$.
\end{example}

%
%
%

\section*{Acknowledgements}
A.R.G. and A.W. acknowledge support from the Royal Society of New Zealand via Marsden Grant 24-UOA-005.
A.W. was also supported by Simons Foundation Collaboration Grant for Mathematicians ID 686131.
S.B. would like to acknowledge the generous financial support of Matt Gimlin as well as the U.C. Davis QMAP center  for hospitality.


\begin{thebibliography}{10}

\bibitem{BPST}
A.~A. Belavin, A.~M. Polyakov, A.~S. Schwartz and Yu.~S. Tyupkin,  Pseudoparticle solutions of the Yang-Mills equations, {\it Phys. Lett. B.} 59 85--87, 1975.


\bibitem{BEG}
T.~N. Bailey, M.~G. Eastwood, and A.~R. Gover.
\newblock Thomas's structure bundle for conformal, projective and related
  structures,
\newblock {\it Rocky Mountain J. Math.}, 24(4):1191--1217, 1994.

\bibitem{BGforms}
T. Branson, A.~R. Gover, Conformally invariant operators, differential forms, cohomology and a generalisation of Q-curvature, {\it Comm. Partial Differential Equations} 30, 1611--1669, 2005.

\bibitem{BGdeRham}
T.~P. Branson and A.~R. Gover. Pontrjagin forms and invariant objects related to the $Q$-curvature, {\it Comm.  Cont. Math,} 9: 335--358, 2007.



\bibitem{CGtams}
A.~\v{C}ap, and A.~R. Gover.
\newblock Tractor calculi for parabolic geometries,
\newblock {\em Trans. Am. Math. Soc.}, 354(4):1511--1548, 2001.

\bibitem{curry-G}
S.~N. Curry and A.~R. Gover.
\newblock An Introduction to Conformal Geometry and Tractor Calculus, with a view to Applications in General Relativity,
\newblock {\em Asymptotic Analysis in General Relativity},
\newblock London Mathematical Society Lecture Note Series. Cambridge University Press; 86--170, 2018.



\bibitem{Don1}
S. Donaldson, An Application of Gauge Theory to Four Dimensional Topology, {\it J. of Diff. Geom.}, 18 279--315, 1983. 

\bibitem{Don2}
S. Donaldson, P.B. Kronheimer, The geometry of four-manifolds, {\it Oxford Mathematical Monographs}, Clarendon Press, Oxford University Press, New York, 1990. 

\bibitem{FG}
C. Fefferman and C.~R. Graham,
\newblock Conformal invariants,
\newblock Asterisque, Number {N}um\'{e}ro {H}ors {S}\'{e}rie, 95--116, 1985,
\newblock The mathematical heritage of \'{E}lie Cartan (Lyon, 1984).



\bibitem{GLWZ1}
A.~R.~Gover, E.~Latini, A.~Waldron and Y.~Zhang,
\newblock Conformally compact and higher conformal Yang-Mills equations,
\newblock arXiv:2311.11458 [math.DG], 2024.



\bibitem{GLWZ2}
A.~R.~Gover, E.~Latini, A.~Waldron and Y.~Zhang,
\newblock Renormalized Yang-Mills Energy on Poincar{\'e}-Einstein Manifolds,
\newblock arXiv:2409.06995 [math.DG], 2024.



\bibitem{GPS}
A.~R. Gover, L.~J. Peterson, and C. Sleigh.
\newblock  A conformally invariant Yang–Mills type energy and equation on 6-manifolds,
\newblock {\it Commun. Contemp. Math}, 26(2):2250078, 2024.

\bibitem{GSS}
A.~R. Gover, P. Somberg, V. Sou\v{c}ek, Yang--Mills detour complexes and conformal geometry, {\it Comm. Math. Phys.} 278, 307--327, 2008.



\bibitem{GOpet}
A.~R. Gover and L.~J. Peterson.
\newblock Conformally invariant powers of the {L}aplacian, {$Q$}-curvature, and
  tractor calculus.
\newblock {\em Comm. Math. Phys.}, 235(2):339--378, 2003.

\bibitem{Goal}
A.~R. Gover,
\newblock Almost {E}instein and {P}oincar\'{e}-{E}instein manifolds in
  {R}iemannian signature,
\newblock {\em J. Geom. Phys.}, 60(2):182--204, 2010.

\bibitem{Gover-DN}
A.~R. Gover,
\newblock Conformal Dirichlet--Neumann maps and Poincar\'e--Einstein manifolds,
\newblock {\em SIGMA}, 3:100-120, 2007.

\bibitem{AGo}
A.~R. Gover, Almost conformally Einstein manifolds and obstructions,  {\em Differential geometry and its applications}, 247–260, Matfyzpress, Prague, 2005.

\bibitem{Forms}
A.~R. Gover, E. Latini, and A. Waldron,
\newblock Poincar\'{e}--{E}instein holography for forms via conformal geometry
  in the bulk,
\newblock {\em Mem. Amer. Math. Soc.}, 235(1106):VI--95, 2015.


\bibitem{GW}
A.~R. Gover and A. Waldron,
\newblock Boundary calculus for conformally compact manifolds,
\newblock {\em Indiana Univ. Math. J.}, 63(1):119--163, 2014.


\bibitem{Will1}
A.~R. Gover and A.~Waldron, Conformal hypersurface geometry via a boundary Loewner--Nirenberg--Yamabe problem,
 {\it Commun. Anal. Geom.}, 29(4):779–836, 2021.
 
\bibitem{GJMS}
C.~R. Graham, R.  Jenne, L. Mason, 
              G.~A.~J. Sparling, 
      {Conformally invariant powers of the 
      {L}aplacian. {I}.
              {E}xistence},
   {\it J. London Math. Soc. (2)},
     {46}: 557--565, 1992.

 
 \bibitem{Kaku}
 M. Kaku, P.K. Townsend and P. van Nieuwenhuizen,
 Properties of conformal supergravity,
{\it Phys. Rev.} D 17, 3179, 1978. 
 
  \bibitem{Taronna}
E.~Joung, M.~Taronna and A.~Waldron,
A Calculus for Higher Spin Interactions,
{\it JHEP} {07}, 186, 2013.


\bibitem{Penrose}
R.~Penrose,
\newblock Structure of space-time,
\newblock \textit{Battelle Rencontres}, 121--235, 1968.
     


\bibitem{Silhan}
J. \v{S}ilhan, Invariant differential operators in conformal geometry, Ph.D. thesis, The University of Auckland, 2006.


\bibitem{YandM}
C.~N. Yang, R. Mills, Conservation of isotopic spin and isotopic gauge invariance, {\it Phys. Rev.} 96, 191--195, 1954. 




%
%
%
%
%










%
%
















  







%


























 


 





%
%




































\end{thebibliography}
\end{document}